\journal{Differential equations}
\begin{document}

\begin{frontmatter}
\title{Contracting differential equations\\ in weighted Banach spaces}

\author[inst1]{Anand Srinivasan\corref{cor1}}
\ead{asrini@mit.edu}
\affiliation[inst1]{organization={Department of Mathematics, MIT},
            addressline={77 Massachusetts Avenue}, 
            city={Cambridge},
            postcode={02139}, 
            state={MA},
            country={USA}
            }

\author[inst2]{Jean-Jacques Slotine}
\ead{jjs@mit.edu}
\affiliation[inst2]{organization={Nonlinear Systems Laboratory, MIT},
            addressline={77 Massachusetts Avenue}, 
            city={Cambridge},
            postcode={02139}, 
            state={MA},
            country={USA}
            }
            
\cortext[cor1]{Corresponding author (Present affiliation: Harvard University, Cambridge, MA 02138)}

\begin{abstract}
Geodesic contraction in vector-valued differential equations is readily verified by linearized operators which are uniformly negative-definite in the Riemannian metric. In the infinite-dimensional setting, however, such analysis is generally restricted to norm-contracting systems. We develop a generalization of geodesic contraction rates to Banach spaces using a smoothly-weighted semi-inner product structure on tangent spaces. We show that negative contraction rates in bijectively weighted spaces imply asymptotic norm-contraction, and apply recent results on asymptotic contractions in Banach spaces to establish the existence of fixed points. We show that contraction in surjectively weighted spaces verify non-equilibrium asymptotic properties, such as convergence to finite- and infinite-dimensional subspaces, submanifolds, limit cycles, and phase-locking phenomena. We use contraction rates in weighted Sobolev spaces to establish existence and continuous data dependence in nonlinear PDEs, and pose a method for constructing weak solutions using vanishing one-sided Lipschitz approximations. We discuss applications to control and order reduction of PDEs.
\end{abstract}



\begin{keyword}
Abstract differential equations \sep Banach spaces \sep Contraction mappings \sep Asymptotic contractions
\end{keyword}

\end{frontmatter}

\section{Introduction}

Abstract differential equations in Banach spaces arise in the study of infinite-dimensional dynamical systems, such as those governed by partial or delay differential equations. In this work, we consider the Cauchy (initial value) problem:
\begin{align}
    \label{eq:cauchy1}
    \frac{d}{dt}u &= f(t, u),\quad u(0) = u_0
\end{align}
such that $u_0$ and the domain of $f(t, \cdot)$ are contained in some subset $U$ of a Banach space $V$. We are primarily concerned with the long-time behavior of \eqref{eq:cauchy1}, which additionally leads to existence and uniqueness properties in the time-independent case. We begin with a discussion of some of the difficulties of stability analysis in the infinite-dimensional setting, and subsequently introduce our main results for establishing the incremental stability of \eqref{eq:cauchy1}.

When $V$ is finite-dimensional (e.g. if \eqref{eq:cauchy1} is an ordinary differential equation in $\C^n$), asymptotic stability about a fixed point may be thought of as an intrinsic topological property of the dynamics, as all norms on fixed-dimension vector spaces over $\R$ ($\C$) are homeomorphic and therefore preserve the asymptotic stability.

When $V$ is infinite-dimensional, on the other hand, distinct norms often induce distinct topologies -- thus, convergence to an equilibrium in one norm may not imply convergence in another. For example, consider an $\R$-indexed set of copies of an identical ordinary differential equation $\dot{u}(x) = f(u(x))$ which is globally asymptotically stable about $c \ne 0$. While the function $u(t, x)$ approaches $u(x) \equiv c$ in $L^\infty(\R)$, it is divergent in $L^p$ for $1 \le p < \infty$. An example where the stability in different norms is controlled by a nonlinearity rather than an unbounded domain is shown in \cite{coron2015dissipative}, where the authors consider the following nonlinear hyperbolic system with coupled boundaries:
\begin{align*}
    u : \R^+ \times [0, 1] &\to \R^n\\
    u_t + F(u)u_x &= 0\\
    u(t, 0) &= G(u(t, 1))
\end{align*}
In Theorems 2 and 3, \cite{coron2015dissipative}, it is shown that
there exist conditions on the coupling $G$ which result in exponential stability of the dynamics about $0$ in the Sobolev space $H^2$, yet do not imply the same in the $C^1$ norm. Thus, stability in infinite-dimensional spaces is an extrinsic property, dependent additionally upon the choice of norm.

Recent work \cite{lohmiller1998contraction, soderlind2006logarithmic, lohmiller2005pde, aminzare2013logarithmic} has sparked interest in a non-asymptotic property of dynamical systems, called \textit{incremental} stability \cite{ruffer2013convergent} -- the property that distances between any pair of solutions are monotonically decreasing in time. This is due to the relative ease of verifying incremental stability using the following ``global'' spectral analysis of the linearization \cite{lohmiller1998contraction}, compared to the construction of a Lyapunov functional. If the symmetrized Jacobian is uniformly (in time \& space) negative-definite, the system satisfies the  exponential growth bound between any pair of solutions  $u(t), v(t)$:
\begin{align}
    \label{eq:contracting}
   \exists \lambda < 0,\ \forall t \ge 0\quad d(u(t), v(t)) \le e^{\lambda t} d(u_0, v_0)
\end{align}
and is said to be \textit{contracting}.

It is known  \cite{lohmiller1998contraction} that in the automonous case, contraction implies global exponential stability in complete metric spaces, by application of the Banach fixed-point theorem. In the non-autonomous case, contraction verifies the existence of a limiting (possibly non-equilibrium) trajectory. These spectral conditions have been extended to the functional-analytic setting through the use of logarithmic norms, \cite{soderlind2006logarithmic} operator measures, \cite{cisneros2020contraction} and semi-inner products \cite{soderlind2006logarithmic, lumer1961semi, giles1967classes, aminzare2014contraction} on Hilbert and Banach spaces.

While contraction methods provide an attractive alternative to Lyapunov analysis for nonlinear systems, it is also known (see Section \ref{sec:metric_dependence} or \cite{banasiak2020logarithmic}) that contraction rates are strongly dependent upon the choice of coordinates and norm, and thus are not true dynamical invariants. A solution to this issue for finite-dimensional systems was introduced in \cite{lohmiller1998contraction}, the construction of a ``generalized Jacobian'' inducing the linearized dynamics after an arbitrary differential coordinate transform (Riemannian metric on $\R^n$) which relaxes the condition of contraction in the norm to more general geodesic distances. This was subsequently generalized to the case of arbitrary (finite-dimensional) Riemannian manifolds in \cite{simpson2014contraction, bullo2021contraction}.

The main contribution of our work is to generalize this geodesic contraction analysis to infinite-dimensional spaces. Rather than explicitly constructing Hilbert or Banach manifolds with Riemannian metrics (which may be well-approximated by open sets of the model space in the separable setting \cite{henderson1969infinite}), we use a time- and space-varying semi-inner product \cite{lumer1961semi} on tangent spaces, which we refer to as a \textit{weighted} Banach space. This gives an implicit, and non-unique, notion of geodesic distance which we measure via growth estimates on perturbations. We show, under uniform boundedness assumptions, that contraction in a bijectively weighted Banach space implies \textit{asymptotic} contraction in the norm. This controls for the possibility of initial expansive transients (see Section \ref{sec:metric_dependence}) and yields an incremental stability condition for infinite-dimensional systems which is invariant under a broad class of coordinate transforms. Quantified over possible weights within a condition number radius, we refer to this as the \textit{asymptotic} contraction rate (Definition \ref{def:as_rate}) for the system, which we show upper-bounds the maximum Lyapunov exponent.

We then apply a recently shown asymptotic contraction mapping principle for Banach spaces \cite{kirk2003fixed} to show the existence of equilibria in autonomous systems which are contracting in weighted Banach spaces. We show that the asymptotic contraction rate is a dynamical invariant up to a choice of two parameters, the uniform bound on the weight and choice of norm.

In the second part (Section \ref{sec:invariant_sets} and later) we relax the condition of an invertible weight to a merely surjective one. This produces a family of seminormed tangent spaces in which we show that negative contraction rates verify several \textit{non-equilibrium} asymptotic properties, such as convergence to subspaces (Section \ref{sec:subspace_contraction}), submanifolds (Section \ref{sec:manifold_contraction}), and invariant sets of transformations (Section \ref{sec:symmetries}). Through the use of weighting schemes, we relax the requirement that contracting phase-space partitions be convex to merely convex after an arbitrary finite transient.

Lastly, we present several applications of contraction analysis in weighted Banach spaces to partial differential equations in Section \ref{sec:ap_pdes}. We show that surjectively weighted spaces can be used to verify long-time properties of conservation laws, and that asymptotic contraction rates establish the existence and uniqueness of solutions to time-independent equations under fairly general assumptions. We show how weighted contraction analysis can be used to infer regularity properties -- in particular, introducing regularizing operators to construct weak solutions via vanishing one-sided Lipschitz approximations, in a manner similar to the vanishing viscosity method.

\subsection{Related work}

In \cite{lohmiller1998contraction}, the authors introduced Riemannian metrics on $\R^n$ to measure contraction rates of dynamics in geodesic distances other than the norm. This enabled the verification of incremental stability in vector-valued systems with various forms of interconnection, in which the choice of Riemannian metric could yield a uniformly negative-definite Jacobian. Section 3.7, \cite{lohmiller1998contraction} mentions the possibility of using weighted matrix measures (see also \cite{desoer1972measure}), which is the central topic of our development here (using semi-inner products on Banach spaces rather than matrix measures on $\R^n$). In \cite{cisneros2020contraction, soderlind2006logarithmic}, several formulations of the contraction rate in infinite-dimensional settings such as Hilbert and Banach spaces, using operator measures and semi-inner products, are introduced and reviewed. These are restricted to norm-contracting systems, except in the case of time-invariant surjectively weighted inner-product on Hilbert spaces in \cite{cisneros2020contraction}, which is referred to as \textit{partial} contraction. Our notion of geodesic contraction, on the other hand, relies on a (nonautonomous, space-varying) bijectively or surjectively-weighted semi-inner product.

In \cite{bullo2021contraction}, the authors apply weighted $\ell^1(\R^n)$ and $\ell^\infty(\R^n)$ norms to derive broad stability conditions for a class of numerical algorithms, such as recurrent neural networks \cite{davydov2021non2}. Our results use weighted contraction rates in general complete normed spaces to verify asymptotic equilibrium and non-equilibrium behaviors in infinite-dimensional and continuum systems.

In \cite{aminzare2014contraction}, the authors apply diagonally weighted norms to establish the incremental stability of diffusion and reaction-diffusion systems. We employ a more general weighting scheme, such as those induced by projection operators to invariant subspaces (\ref{thm:subspace}) and tangent spaces of invariant manifolds (\ref{thm:hbm_contraction}) to illustrate non-equilibrium properties such as convergence to subsets (\ref{thm:limit_cycle}).

\section{Main results}
\label{sec:main}
Our main construction is a weighted semi-inner product structure on a Banach space, which we introduce below.

\begin{definition}[Banach spaces \& bounded linear operators]
A normed vector space $V, \norm{\cdot}_V$ is a \textit{Banach} space if and only if every Cauchy sequence $\{u_i\}_{i=1}^\infty \subset V$ is convergent in $V$. The space of bounded linear operators $\mathcal{B}(V, W)$ between two normed spaces $V, W$ is the set of linear maps $T : V \to W$ such that 
\begin{align*}
    \norm{T}_{\mathcal{B}(V, W)} &:= \sup_{v\in V,\norm{v} = 1}\norm{Tv}_W < \infty
\end{align*}
where $\norm{\cdot}_{\mathcal{B}(V, W)}$ defines a norm on $\mathcal{B}(V, W)$. We define $\mathcal{B}(V) := \mathcal{B}(V, V)$ and drop the index $\norm{\cdot}_V$ when clear from context.
\end{definition}

\begin{definition}[Weighted Banach space]
\label{def:w_bsp}
Let $V, W$ be Banach spaces and $\Theta(t, v) \in \B(V, W)$ a family of surjective bounded linear operators parametrized by $t \in \R_+$ and $v \in V$. We define a \textit{weighted} space $V_\Theta$ to be a $t, v$-parametrized family of vector spaces with semi-norms:
\begin{align}
    \label{eq:weighted_bsp}
    \norm{u}_{\Theta(t, v)} &= \norm{\Theta(t, v)u}_W
\end{align}
where $\Theta(t, v)$ is continuously Fr\'echet-differentiable on $(0, \infty)\times V$ (henceforth referred to as $C^1$) with partial derivatives denoted by $\frac{\partial \Theta}{\partial t}, \frac{\partial \Theta}{\partial v}$, and is uniformly bounded:
\begin{align*}
    \sup_{t\in\R_+, v\in V} \norm{\Theta(t, v)}_{\B(V, W)} < \infty
\end{align*}
If additionally $\Theta(t, v)$ is injective with uniformly bounded condition number:
\begin{align*}
    \sup_{t\in\R_+, v\in V}\kappa(\Theta(t, v)) &= \sup_{t\in\R_+, v\in V}\norm{\Theta(t, v)}\norm{\Theta(t,v)^{-1}} < \infty
\end{align*}
then \eqref{eq:weighted_bsp} defines a norm, and we refer to $V_\Theta$ as a weighted Banach space.
\end{definition}

For the above family of spaces, we define an associated semi-inner product in the sense of Lumer \cite{lumer1961semi} and Giles \cite{giles1967classes}, induced by the right-handed Gateaux derivative of the weighted norm (see 5.1, \cite{soderlind2006logarithmic}).
\begin{definition}[Semi-inner product associated with a weighted Banach space]
\label{def:weighted_sip}
Let $V_\Theta$ be a weighted Banach space as in Definition \ref{def:w_bsp}. Let $t \ge 0, p, u, v \in V$, and define the $t, p$-varying semi-inner product $[\cdot, \cdot]_\Theta$ associated with $V_\Theta$ as:
\begin{align}
    \label{eq:weighted_sip}
    [u, v]_{\Theta(t, p)} := \norm{u}_{\Theta(t, p)}\lim_{h\to0^+}\frac{\norm{u + hv}_{\Theta(t, p)} - \norm{u}_{\Theta(t, p)}}{h} 
\end{align}
When $\Theta(t, p) = I$, we use the index $[\cdot, \cdot]_V$ or omit it when clear from context.
\end{definition}
In prior work (e.g. \cite{soderlind2006logarithmic, aminzare2014contraction}), it was customary to distinguish the left- and right-Gateaux derivatives of the (unweighted) norm in \eqref{eq:weighted_sip} using the notation $(\cdot, \cdot)_{\pm}$. Since we only use the right-handed derivative for forward stability analysis, we drop the index $+$ and replace it with the weight $\Theta(t, p)$.
\begin{lemma}[Properties of the semi-inner product]
Let $V_\Theta$ be a weighted Banach space, $W$ the codomain of $\Theta$, and $[\cdot, \cdot]_{\Theta}$ the associated semi-inner product. For all $u, v, w, p \in V$ and $t \ge 0$,
\begin{enumerate}
    \item $[u, v]_{\Theta(t, p)}$ exists 
    \item $[u,u]_{\Theta(t, p)} = \norm{u}_{\Theta(t, p)}^2$
    \item $|[u, v]_{\Theta(t, p)}|^2 \le \norm{u}_{\Theta(t, p)}^2\norm{v}_{\Theta(t, p)}^2$
    \item $[u, v + w]_{\Theta(t, p)} \le [u, v]_{\Theta(t, p)} + [u, w]_{\Theta(t, p)}$
    \item For all $\alpha \ge 0$,
    \begin{align*}
        \alpha[u, v]_{\Theta(t, p)} &= [\alpha u, v]_{\Theta(t, p)} = [u, \alpha v]_{\Theta(t, p)}
    \end{align*}
\end{enumerate}
\end{lemma}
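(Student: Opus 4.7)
The plan is to exploit the linearity of $\Theta(t,p)$ to reduce every assertion to the corresponding classical property of the Lumer--Giles semi-inner product on the codomain Banach space $W$. Setting $\tilde u := \Theta(t,p)u$ and $\tilde v := \Theta(t,p)v$, linearity gives $\norm{u + hv}_{\Theta(t,p)} = \norm{\tilde u + h\tilde v}_W$ for every $h \in \R$, so the defining limit \eqref{eq:weighted_sip} collapses to
\begin{equation*}
[u,v]_{\Theta(t,p)} = \norm{\tilde u}_W \lim_{h \to 0^+} \frac{\norm{\tilde u + h\tilde v}_W - \norm{\tilde u}_W}{h} = [\tilde u, \tilde v]_W,
\end{equation*}
i.e.\ the weighted semi-inner product is the pullback of the standard Lumer--Giles semi-inner product on $W$ under the linear map $\Theta(t,p)$. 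Each property in the lemma can thus be inferred from the analogous property of $[\cdot,\cdot]_W$.

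For property (1), the function $\phi(h) := \norm{\tilde u + h\tilde v}_W$ is convex on $\R$, since $\norm{\cdot}_W$ satisfies the triangle inequality and is positively homogeneous; the difference quotient $h^{-1}(\phi(h) - \phi(0))$ is then non-decreasing in $h > 0$ and bounded below by $-\norm{\tilde v}_W$ via the reverse triangle inequality, so $\phi'_+(0)$ exists and is finite, whence $[u,v]_{\Theta(t,p)}$ exists. Property (2) is immediate from $\phi(h) = (1+h)\norm{\tilde u}_W$ when $\tilde v = \tilde u$. For Cauchy--Schwarz (3), the triangle inequality gives $|\phi(h) - \phi(0)| \le h \norm{\tilde v}_W$ for $h > 0$, so $|\phi'_+(0)| \le \norm{\tilde v}_W$ and multiplication by $\norm{\tilde u}_W$ yields $|[u,v]_{\Theta(t,p)}| \le \norm{u}_{\Theta(t,p)} \norm{v}_{\Theta(t,p)}$. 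Subadditivity (4) follows from the midpoint convexity estimate
\begin{equation*}
\norm{\tilde u + h(\tilde v + \tilde w)}_W \le \frac{1}{2}\norm{\tilde u + 2h\tilde v}_W + \frac{1}{2}\norm{\tilde u + 2h\tilde w}_W,
\end{equation*}
by subtracting $\norm{\tilde u}_W$, dividing by $h > 0$, and sending $h \to 0^+$; the factor of $2$ in the arguments cancels the factor $\frac{1}{2}$ after differentiation, producing a clean sum of right derivatives, which multiplies back by $\norm{\tilde u}_W$ to give (4). Finally, (5) reduces to a change of variables $h' = h/\alpha$ (for $\alpha > 0$) in the defining limit combined with the positive homogeneity $\norm{\alpha u}_{\Theta(t,p)} = \alpha \norm{u}_{\Theta(t,p)}$; the case $\alpha = 0$ is trivial.

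The only genuinely nontrivial technical point in the entire argument is the existence of the one-sided derivative $\phi'_+(0)$ in (1); every remaining claim is then an algebraic consequence of the triangle inequality and positive homogeneity of $\norm{\cdot}_W$, transported through the linear weight $\Theta(t,p)$. I would emphasize that neither the surjectivity nor the bijectivity hypotheses on $\Theta(t,p)$ from Definition \ref{def:w_bsp} play any role here --- only boundedness and linearity of $\Theta(t,p)$ and the seminorm axioms on $\norm{\cdot}_W$ --- so the lemma applies uniformly to both the semi-normed and normed cases.
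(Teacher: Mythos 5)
Your proposal is correct and follows essentially the same route as the paper: both reduce every claim, via linearity of $\Theta(t,p)$, to the corresponding property of the unweighted Lumer--Giles semi-inner product $[\cdot,\cdot]_W$ on the codomain (with (1) from convexity of the norm and (5) by reparametrization of $h$). The only difference is that you prove the properties of $[\cdot,\cdot]_W$ from first principles where the paper cites them, and your closing observation that only linearity and boundedness of $\Theta$ are used is accurate.
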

\begin{proof}

\begin{enumerate}
    \item[1.] $W$ is a normed space, and the norm has left- and right-Gateaux derivatives by convexity. The limit in \eqref{eq:weighted_sip} is the right-handed Gateaux derivative of $\norm{\cdot}_W$.
    \item[2.-4.] These directly follow from the properties of the un-weighted semi-inner product $[\cdot,\cdot]_W$ on $W$, which themselves follow from the properties of $\norm{\cdot}_W$ (see e.g. equation (5.2) and Proposition 5.1 in \cite{soderlind2006logarithmic}).
    \item[5.] Seen by reparametrization $h \mapsto h\alpha$ and $h \mapsto h\alpha^{-1}$ respectively when $\alpha > 0$, and direct substitution otherwise.
\end{enumerate}
\end{proof}

Using \eqref{eq:weighted_sip}, we generalize geodesic contraction in Riemannian metrics to Banach spaces by emulating continuously-varying dual pairings on tangent spaces. Formally, we consider $V$ to be a Banach manifold with $T_pV \isoto V$ via the trivial isomorphism, and assign $T_pV$ the $C^1$-weighted semi-inner product
\begin{align}
    \label{eq:tangent_space}
    [u, v]_{\Theta(t, p)}\ \quad \forall u, v \in T_pV
\end{align}
giving a notion of nonzero ``curvature'' to $V$. Note that \eqref{eq:tangent_space} is continuous in $u, v, t, p$ if and only if the norm $\norm{\cdot}_V$ is Gateaux-differentiable (Theorem 3, \cite{giles1967classes}).

We use terms such as ``curvature'' informally, as there are several possible notions of angle and geodesic distance in a Banach space. For instance, symmetrized Gateaux derivatives of the norm (Section 3.6, \cite{balestro2017angles}) yield one possibility in uniformly convex spaces, while no canonical choice is known in general. Instead of explicitly constructing such distances, we measure the contraction rate using weighted norms of perturbation vectors on tangent spaces along a solution to \eqref{eq:cauchy1} using the norm induced by \eqref{eq:tangent_space} as follows.

\begin{definition}[Weighted contraction rate]
\label{def:weighted_rate}
Let $V_\Theta$ be a weighted Banach space and $[\cdot, \cdot]_{\Theta}$ the associated semi-inner product as in Definition \ref{def:weighted_sip}. The $\Theta$-weighted contraction rate of a linear operator $A$ with domain $D(A)$ is defined as the functional:
\begin{align}
    \label{eq:weighted_rate}
    M_{\Theta(t, u)}[A] &= \sup_{v \ne 0,\ v \in D(A)} \frac{[\Theta(t, u)v, (\frac{\partial\Theta}{\partial t}(t, u) + \Theta(t, u)A)v]_W}{\norm{v}_{\Theta(t, u)}^2}
\end{align}
Note that $A$ need not be bounded. If $\Theta$ is time-invariant, then 
\begin{align*}
    M_{\Theta(u)}[A] &= \sup_{v \ne 0, v \in D(A)}\frac{[v, Av]_{\Theta(u)}}{\norm{v}^2_{\Theta(u)}}
\end{align*}
\end{definition}

We show that $M_{\Theta(t, v)}$ verifies incremental stability of the dynamics \eqref{eq:cauchy1} in the differential sense below. In the following and throughout, we use the shorthand $Df_t(u) := \frac{\partial f}{\partial u}(t, u)$.

\begin{theorem}[Growth of perturbations in weighted spaces]
\label{thm:growth}
Let $V_\Theta$ be a weighted Banach space and $f(t, u) \in C^1(\R_+ \times V)$. If 
\begin{align*}
    \sup_{u \in V}M_{\Theta(t, u)}[Df_t(u)] \le \lambda(t)\ \quad \forall t \ge 0
\end{align*}
then any perturbation $\delta u(t) \in T_{u(t)}V$ at a point $u(t)$ evolving by the time-varying linear dynamics:
\begin{align*}
    \frac{d}{dt}\delta u &= Df_t(u)\delta u
\end{align*}
satisfies the growth bound:
\begin{align*}
    \norm{\delta u(t)}_{\Theta(t, u)} &\le \exp\left(\int_{t_0}^t \lambda(s)ds\right) \norm{\delta u(t_0)}_{\Theta(t_0, u)}
\end{align*}
for all $t \ge t_0 \ge 0$.
\end{theorem}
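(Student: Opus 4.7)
The plan is to reduce the bound to a scalar differential inequality for the squared weighted norm of $\delta u$ and invoke a Gronwall-type comparison. Set $w(t) := \Theta(t, u(t))\delta u(t) \in W$, so that $\norm{w(t)}_W = \norm{\delta u(t)}_{\Theta(t, u(t))}$. Since the Lumer--Giles semi-inner product $[\cdot,\cdot]_W$ on $W$ is the right-hand Gateaux derivative of the $W$-norm, one has the Dini-derivative identity $\tfrac12 D^+ \norm{w(t)}_W^2 = [w(t), w'(t)]_W$ whenever the right derivative $w'(t)$ exists (cf. Proposition 5.1 of \cite{soderlind2006logarithmic}). This reframes the problem as bounding the semi-inner product of $w(t)$ against its forward derivative in $W$.

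Next I would differentiate $w(t)$ in $t$ using the chain rule together with $\dot u = f(t, u)$ and $\dot{\delta u} = Df_t(u)\delta u$, arriving at $w'(t) = \bigl(\tfrac{\partial\Theta}{\partial t}(t, u(t)) + \Theta(t, u(t))\,Df_t(u(t))\bigr)\delta u(t)$ (modulo the Fréchet cross term discussed below). This is precisely the operator appearing on the right of the semi-inner product in \eqref{eq:weighted_rate}. Substituting into the Dini-derivative identity and invoking the supremum defining $M_\Theta$ together with the hypothesis $\sup_{u} M_{\Theta(t,u)}[Df_t(u)] \le \lambda(t)$ yields the pointwise inequality $\tfrac12 D^+ \norm{\delta u(t)}_{\Theta(t, u(t))}^2 \le \lambda(t)\,\norm{\delta u(t)}_{\Theta(t, u(t))}^2$. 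A one-sided Gronwall comparison applied to $g(t) := \norm{\delta u(t)}_{\Theta(t, u(t))}^2$ then gives $g(t) \le g(t_0)\exp\bigl(2\int_{t_0}^t \lambda(s)\,ds\bigr)$, and the bound follows upon taking square roots.

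The main obstacle I anticipate is justifying the chain-rule step cleanly. Because $\Theta$ is Fréchet-differentiable in both arguments and $u(t)$ evolves along the flow, the total time derivative $\tfrac{d}{dt}\Theta(t, u(t))\delta u(t)$ \emph{a priori} carries an additional contribution of the form $\tfrac{\partial\Theta}{\partial v}(t, u(t))[f(t, u(t))]\,\delta u(t)$ that is not reflected in Definition \ref{def:weighted_rate}. Making the argument rigorous therefore requires either a structural convention in which the $\tfrac{\partial\Theta}{\partial t}$ slot is read as the full transport derivative of $\Theta$ along the flow, or a restriction to weights for which this cross term acts trivially on the perturbation direction. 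Once this is fixed, the remaining regularity needed to identify the Dini derivative with a classical right derivative and to invoke the standard one-sided comparison lemma is routine given the $C^1$ hypotheses on $\Theta$ and $f$.
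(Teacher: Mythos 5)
Your proposal follows essentially the same route as the paper: compute the right-hand Gateaux (Dini) derivative of the weighted norm of the perturbation, identify it with the semi-inner product $[\Theta\delta u, \frac{d}{dt}(\Theta\delta u)]_W$ (up to the normalization by $\norm{\Theta\delta u}_W$), bound it by $\lambda(t)$ times the norm via the supremum defining $M_{\Theta}$, and close with a one-sided Gr\"onwall comparison --- your use of the squared norm rather than the norm itself is a cosmetic variant. The cross-term concern you raise is genuine and is precisely what the paper's proof elides: it writes $\frac{d}{dt}\left[\Theta(t,u)\delta u(t)\right]$ and substitutes $\left(\frac{\partial\Theta}{\partial t}(t,u) + \Theta(t,u)Df_t(u)\right)\delta u$ without accounting for $\frac{\partial\Theta}{\partial v}(t,u)[f(t,u)]\,\delta u$, so it implicitly adopts your first resolution, namely reading the $\frac{\partial\Theta}{\partial t}$ slot in Definition \ref{def:weighted_rate} as the full transport derivative of $\Theta$ along the flow.
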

\begin{proof}
Let $d^+_t$ denote the right-handed Gateaux derivative of a scalar function of $t$. Note that $d^+_t\norm{\delta u(t)}$ exists finite for any norm by convexity. By the chain rule for Gateaux derivatives, 
\begin{align*}
    d^+_t\norm{\delta u(t)}_{\Theta(t, u)} &=\lim_{h\to0^+} \frac{\norm{\Theta(t, u)\delta u(t) + h\frac{d}{dt}[\Theta(t, u)\delta u(t)]}_W - \norm{\Theta(t, u)\delta u(t)}_W}{h}\\
    &= [\Theta(t, u)\delta u(t), \frac{d}{dt}[\Theta(t, u)\delta u(t)]]_W\\
    &\le M_{\Theta(t, u)}[Df_t(u)]\norm{\delta u(t)}_{\Theta(t, u)}\\
    &\le \lambda(t) \norm{\delta u(t)}_{\Theta(t, u)}
\end{align*}
Since this holds for arbitrary $u \in V$, the result follows by an application of Gr\"onwall's inequality for right-handed derivatives.
\end{proof}

We now show that contraction in a suitably weighted space implies asymptotic contraction in the unweighted space. Furthermore, asymptotic contraction rates are invariant under the choice of weight. This result is also novel in finite dimension, although in that case the topological equivalence of uniformly bounded Riemannian metrics gives the result immediately.

\begin{definition}[Asymptotically contracting dynamics]
\label{def:ac}
Let $\phi(t, u_0) : \R_+ \times W$ be the propagator induced by solutions to the Cauchy problem \eqref{eq:cauchy1}:
\begin{align*}
    \frac{d}{dt}u(t) &= f(t, u)\ \forall t\\
    u(0) &= u_0\\
    \phi(t, u_0) &:= u(t)
\end{align*}
for an initial condition $u_0 \in W$ with $W$ a closed convex subset of a Banach space $V$. We say that \eqref{eq:cauchy1} is asymptotically contracting if there exists $\lambda_0 \in [0, 1)$ and a function $\lambda(t) : \R_+ \to [0, \infty)$ with $\lambda(t) \to \lambda_0$ as $t \to \infty$ such that:
\begin{align*}
    \norm{\phi(t, u_0) - \phi(t, v_0)} \le \lambda(t)\norm{u_0 - v_0}
\end{align*}
for all $t \ge 0$ and $u, v \in W$.
\end{definition}
When $f$ is time-invariant and there exists $u_0 \in W$ such that $\sup_t\norm{\phi(t, u_0)} < \infty$ (a bounded solution), Theorem 2.1, \cite{kirk2003fixed} establishes that asymptotically contracting dynamics approach fixed points.

\begin{theorem}[Contraction in an invertibly weighted space]
\label{thm:c_cont}
Let $\Theta(t, u) \in C^1(\R_+ \times V, GL(V, W))$ be a continuously differentiable family of bijective bounded linear operators between Banach spaces $V, W$ with the uniform boundedness property:
\begin{align*}
    \norm{\Theta(t, u)} \le B_1, \norm{\Theta(t, u)^{-1}} \le B_2\quad \forall t \ge 0,\ u \in V
\end{align*}
and $M_{\Theta(t, u)}$ the $\Theta$-weighted contraction rate as in Definition \ref{def:weighted_rate}. Let $u(t)$ be a solution of \eqref{eq:cauchy1} and suppose that:
\begin{align*}
    \sup_{t\ge 0,\ u\in V}M_{\Theta(t, u)}[Df_t(u)] \le \lambda
\end{align*}
Then:
\begin{enumerate}
    \item $M_{\Theta(t, u)}[Df(t,u)] = M_W[(\frac{\partial\Theta}{\partial t}(t, u) + \Theta(t,u) Df_t(u)) \Theta(t,u)^{-1}]$
    \item For any two solutions $u_1(t), u_2(t)$ to \eqref{eq:cauchy1}, we have:
    $$
    \norm{u_1(t) - u_2(t)} \le B_1B_2e^{\lambda t}\norm{u_1(0)-u_2(0)}
    $$
    with $B_1B_2 = \kappa(\Theta)$ the condition number for constant $\Theta$.
    \item If $\lambda < 0$, then \eqref{eq:cauchy1} is an asymptotic contraction in $V$ with rate $\lambda$.
    \item The asymptotic contraction rate $\lambda$ is invariant under choice of $\Theta(t, u)$.
\end{enumerate}
\end{theorem}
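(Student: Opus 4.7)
My plan is to attack the four claims in the stated order, since each leans on the previous. The heart of the argument is the change-of-variables identity in part~(1); parts~(2) and (3) follow by linearizing the difference of two solutions along a one-parameter family of initial conditions and invoking Theorem~\ref{thm:growth}; part~(4) is essentially the observation that $\norm{u_1(t)-u_2(t)}_V$ has no $\Theta$ in it.

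For part~(1), I would substitute $w = \Theta(t,u)v$ in the supremum defining $M_{\Theta(t,u)}[Df_t(u)]$. By hypothesis $\Theta(t,u):V\to W$ is bijective, so $v\mapsto \Theta(t,u)v$ is a bijection $V\setminus\{0\}\to W\setminus\{0\}$ that carries the domain of $Df_t(u)$ onto the domain of the conjugated operator $(\frac{\partial\Theta}{\partial t} + \Theta\,Df_t)\Theta^{-1}$. The numerator in \eqref{eq:weighted_rate} then becomes $[w,(\frac{\partial\Theta}{\partial t} + \Theta\,Df_t)\Theta^{-1} w]_W$ and the denominator $\norm{\Theta v}_W^2 = \norm{w}_W^2$, recovering precisely the unweighted semi-inner product quotient defining $M_W$ on $W$.

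For part~(2), I use a path argument: fix $u_1(0), u_2(0)\in V$, set $u_0(s) := (1-s)u_2(0) + s u_1(0)$, and let $u(t,s) := \phi(t, u_0(s))$. Since $f\in C^1$, the partial $\delta u(t,s) := \partial_s u(t,s)$ exists and solves $\partial_t \delta u = Df_t(u(t,s))\delta u$. The uniform bounds $\norm{\Theta}\le B_1$ and $\norm{\Theta^{-1}}\le B_2$ give $\norm{v}_V \le B_2\norm{v}_{\Theta(t,u)}$ and $\norm{v}_{\Theta(t,u)} \le B_1 \norm{v}_V$ for all $v\in V$. Applying Theorem~\ref{thm:growth} pointwise in $s$ and integrating,
\begin{align*}
\norm{u_1(t)-u_2(t)}_V &\le \int_0^1 \norm{\delta u(t,s)}_V\,ds \le B_2 \int_0^1 e^{\lambda t} \norm{\delta u(0,s)}_{\Theta(0, u_0(s))}\,ds\\
&\le B_1 B_2 e^{\lambda t} \norm{u_1(0)-u_2(0)}_V,
\end{align*}
using $\partial_s u_0(s) = u_1(0) - u_2(0)$ in the last step. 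Part~(3) is then immediate: if $\lambda < 0$, the function $\lambda(t) := B_1 B_2 e^{\lambda t}$ tends to $0\in[0,1)$, matching Definition~\ref{def:ac} verbatim.

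For part~(4), the left-hand side $\norm{u_1(t)-u_2(t)}_V$ in the bound from part~(2) is intrinsic to $f$ and the norm $\norm{\cdot}_V$, so taking $\log$ and dividing by $t$ gives
\[
\limsup_{t\to\infty}\frac{\log\norm{u_1(t)-u_2(t)}_V}{t} \le \lambda
\]
for \emph{every} admissible weight $\Theta$ that verifies contraction at rate $\lambda_\Theta$; the exponential decay rate of perturbations measured in $\norm{\cdot}_V$ is thus bounded above by $\inf_\Theta \lambda_\Theta$, a quantity manifestly independent of any particular choice of weight. The main obstacles I anticipate are the domain bookkeeping for possibly unbounded $Df_t(u)$ in part~(1) and the regularity of $u(t,s)$ in $s$ needed to justify differentiation under the integral in part~(2); both are standard but technical, and the latter uses continuous dependence for the linearized Cauchy problem guaranteed by $f\in C^1$.
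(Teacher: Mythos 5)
Your proposal is correct and takes essentially the same route as the paper: part (1) by the substitution $w = \Theta(t,u)v$, part (2) by applying Theorem \ref{thm:growth} to perturbations, converting between the weighted and unweighted norms via $B_1, B_2$, and integrating along the segment joining the two initial conditions (the paper invokes the fundamental theorem of calculus for Fr\'echet spaces for exactly this step), and parts (3)--(4) from the uniform equivalence of all admissible weighted norms with $\norm{\cdot}_V$. The only cosmetic difference is that you phrase the weight-invariance in (4) as a bound on $\limsup_{t\to\infty} t^{-1}\log\norm{u_1(t)-u_2(t)}_V$ rather than the paper's explicit prefactor chain $\Theta_a \to V \to \Theta_b$, but the underlying mechanism is identical.
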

\begin{proof}
(1) is seen by reparametrization $w = \Theta(t, u)v$:
\begin{align*}
    M_{\Theta(t,u)}[Df_t(u)] &= \sup_{v \ne 0\in V}\frac{[\Theta(t,u)v, (\frac{\partial \Theta}{\partial t}(t,u) + \Theta(t,u)Df_t(u))v]}{\norm{\Theta(t, u)v}_V^2}\\ 
    &= \sup_{w\ne 0 \in W}\frac{[w, (\frac{\partial\Theta}{\partial t}(t,u) + \Theta(t,u)Df_t(u))\Theta(t,u)^{-1})w]}{\norm{w}_W^2}\\ 
    &= M_W\left[\left(\frac{\partial\Theta}{\partial t}(t, u) + \Theta(t,u) Df_t(u)\right) \Theta(t,u)^{-1}\right]
\end{align*} 
In $\R^n$, the argument of $M_W$ in the last line in is referred to in \cite{lohmiller1998contraction} as the \textit{generalized Jacobian}.

We show (2) using linearity of $\Theta$ and (1). Let $\delta u(t)$ a perturbation as in Theorem \ref{thm:growth} and $\delta v = \Theta(t, u)\delta u$ its conjugate. By hypothesis and Theorem \ref{thm:growth},
\begin{align*}
    \norm{\delta v(t)}_W &\le e^{\lambda t}\norm{\delta v(0)}_W \implies\\
    \norm{\delta u(t)}_V &\le \norm{\Theta(t, u)^{-1}}\norm{\Theta(0, u)}\norm{\delta u(0)}e^{\lambda t} \le B_1B_2 \norm{\delta u(0)}e^{\lambda t}
\end{align*}
Applying the fundamental theorem of calculus for Fr\'echet spaces, condition (2) follows.

This immediately implies (3), since if $\lambda < 0$ then \eqref{eq:cauchy1} induces a contraction map after a transient of length $t_0 = -\lambda^{-1}\log(B_1B_2)$.

Finally, (3) implies (4) as follows. If $\Theta_a, \Theta_b$ are two weights satisfying the hypothesis with constants $B_1^a, B_2^a, B_1^b, B_2^b$ and associated codomains $W_a, W_b$, then 
\begin{align*}
    &\norm{\delta u(t)}_{\Theta_a(t,u)} \le e^{\lambda t}\norm{\delta u(0)}_{\Theta_a(0,u)} \implies\\
    &\norm{\delta u(t)}_{V} \le B_1^aB_2^ae^{\lambda t}\norm{\delta u(0)}_{V} \implies\\
    &\norm{\delta u(t)}_{\Theta_b(t,u)} \le B_1^aB_2^aB_1^bB_2^be^{\lambda t}\norm{\delta u(0)}_{\Theta_b(0,u)}
\end{align*}
Hence the \textit{asymptotic} contraction rate (defined as the existence of a finite prefactor such that the growth bound holds) is invariant under $C^1$, uniformly bounded, invertible weights, giving statement (4).
\end{proof}

Theorem \ref{thm:c_cont} establishes the (differential) coordinate independence of asymptotic contraction rates. Accordingly, we define the \textit{asymptotic} contraction rate within a uniformly bounded family of weights as follows. 
\begin{definition}[Radius-$b$ family of weights]
Let $b > 0$ and $V$ be a Banach space. We define the $b$-uniformly bounded, $C^1$ family of invertible maps into a Banach space $W$ as:
\begin{equation}
\begin{split}
    \label{eq:unif_weight}
    B_b(W) = \{&\Theta(t, u) \in C^1(\R_+ \times V, GL(V, W))\ |\\
    &\sup_{t\ge0, u\in V}\max(\norm{\Theta(t, u)},\norm{\Theta(t,u)^{-1}}) \le b\}
\end{split}
\end{equation}
where $GL(V, W) \subset \mathcal{B}(V, W)$ is defined as the space of bijective bounded linear operators from $V$ to $W$.
\end{definition}
For each $b > 0$, $B_b(W)$ defines an admissible family of weights per Definition \ref{def:w_bsp}, which we refer to as having \textit{radius} $b$.

\begin{definition}[Asymptotic contraction rate]
\label{def:as_rate}
Let $b > 0$, $V, W$ be Banach spaces and $B_b(W)$ be a radius-$b$ family of weights. We define the asymptotic contraction rate (parametrized by $b$) of the Cauchy problem \eqref{eq:cauchy1} as:
\begin{align}
    \label{eq:as_rate}
    \lambda_{b} &= \inf_{\Theta(t,u) \in \mathcal{B}_b(W)}\sup_{t \ge 0,\ u\in V}M_{\Theta(t,u)}[Df_t(u)]
\end{align}
\end{definition}
By Theorem \ref{thm:c_cont}, any dynamics with asymptotic contraction rate $\lambda_b < 0$ is contracting in the norm $\norm{\cdot}_V$ after a transient of length at most $t_b = -2\lambda_b^{-1}\log(b)$. We note that unlike the classical contraction mapping principle, a connected region $U \subseteq V$ in which \eqref{eq:cauchy1} is asymptotically contracting (has $\lambda_b < 0$) need not be geodesically convex; in a sense, $U$ need only be \textit{asymptotically} convex, that is, convex under a $t_b$-length application of the propagator of \eqref{eq:cauchy1}.

Lastly, as a shorthand we will define $M_\Theta[f]$ for nonlinear, nonautonomous, continuously differentiable $f$ by collapsing the rightmost term of \eqref{eq:as_rate} as:
\begin{align}
    \label{eq:m_nonlinear}
    M_\Theta[f] &:= \sup_{t \ge 0, u \in V}M_{\Theta(t, u)}[Df_t(u)]
\end{align}
This quantity upper-bounds, but is distinct from, the \textit{one-sided Lipschitz constant} of $f$ \cite{davydov2021non}. We use \eqref{eq:m_nonlinear} instead of $osL(f)$ throughout in order to employ the weighting scheme \eqref{eq:weighted_rate}.

\subsection{Coordinate invariance of asymptotic contraction rates}
\label{sec:metric_dependence}
Theorem \ref{thm:c_cont} establishes the invariance of asymptotic, rather than time-uniform, contraction under differential coordinate changes. Definition \ref{def:as_rate} makes clear that the asymptotic contraction rate lower-bounds the particularly weighted contraction rates, thus is an intrinsic property of \eqref{eq:cauchy1} up to the choice of norm and bound $b$. We show that this property is analogous to, but more general than, the invariance of Lyapunov exponents for autonomous dynamics under measure-preserving invertible transformations.

Let $\delta u_0$ be an initial perturbation of an autonomous dynamics \eqref{eq:cauchy1} satisfying the conditions of Oseledets' theorem \cite{ruelle1979ergodic}; the maximum Lyapunov exponent $\lambda_{MLE}$ is upper-bounded by the (unweighted) contraction rate $\lambda$, by Theorem \ref{thm:growth}:
\begin{align}
    \label{eq:mle}
    \lambda_{MLE} &= \lim_{t\to\infty}\lim_{\norm{\delta u_0}\to0}\frac1t\log \frac{\norm{\delta u(t)}}{\norm{\delta u_0}} \le \lim_{t\to\infty}\lim_{\norm{\delta u_0}\to0}\frac1t\log \frac{\norm{\delta u_0}e^{\lambda t}}{\norm{\delta u_0}} = \lambda
\end{align}
suggesting that $\lambda$ may be used as an estimator for the Lyapunov exponent. An alternate way to establish \eqref{eq:mle} for bounded linearizations $Df_t(u)$ is via the logarithmic norm \cite{dahlquist1958stability} (or \textit{matrix measure} \cite{desoer1972measure}) $\mu$:
\begin{align*}
    D_t^+\log \norm{\delta u(t)} &= \frac{d_t^+ \norm{\delta u(t)}}{\norm{\delta u(t)}} \le \mu(Df_t(u)) \le \lambda
\end{align*}
where $D_t^+$ is the upper-right Dini derivative. The last inequality holds due to the identity $\mu(A) = M[A]$ for bounded linear $A$ (Lemma 12, \cite{lumer1961semi}).

We make some observations about the inequality \eqref{eq:mle}. First, since $\lambda_{MLE}$ measures an asymptotic property while the contraction rate $\lambda$ measures a (time-)uniform property of the dynamics, $\lambda_{MLE}$ is norm-independent in finite dimension while $\lambda$ is not. Thus, we expect $\lambda$ to be larger, accounting for expansive transient periods. Moreover, $\lambda_{MLE}$ is invariant under measure-preserving transformations, and thus isometries in the case of the Lebesgue measure-induced metric on $\R^n$, while $\lambda$ is only invariant under isometry. However, even in this case $\lambda$ naturally extends to non-autonomous dynamics, while $\lambda_{MLE}$ is classically formulated for autonomous vector fields.

However, the radius-$b$ asymptotic contraction rate $\lambda_{b}$ is invariant not only under (Lipschitz) invertible phase-space transformations, but also under arbitrary $b$-bounded differential coordinate changes $\delta v = \Theta \delta u$ for which a phase-space coordinate change $v = f(u)$ may not necessarily exist unless $\Theta = Df$ (as noted in \cite{lohmiller1998contraction}, an integrability condition on $\Theta$). Moreover, it also bounds $\lambda_{MLE}$ by Theorem \ref{thm:c_cont} and substitution into \eqref{eq:mle} as:
\begin{align*}
    \lambda_{MLE} &\le b^2\lambda_b
\end{align*}
We note that the sign is preserved with $b$ arbitrarily large.
Thus, for the aim of verifying incremental stability, the asymptotic contraction rate $\lambda_b$ may be considered a more general dynamical invariant than the maximum Lyapunov exponent.

This invariance is restricted for both quantities, in the infinite-dimensional case, up to a choice of norm (Definition \ref{def:w_bsp}). In general, asymptotic contraction rates in different norms are not comparable, unless additional boundedness assumptions are imposed -- e.g. $L^p(\Omega)$ with $\Omega$ of finite measure, where H\"older's inequality yields growth estimates from $L^q$ for $1 \le p \le q$. For instance, if $\delta u(t)$ is a perturbation at a point $u(t)$ as in Theorem \ref{thm:growth} and $\lambda_q$ is the contraction rate in $\norm{\cdot}_q$, then:
\begin{align*}
    \norm{\delta u(t)}_p &\le |\Omega|^{\frac1p - \frac1q}\norm{\delta u(t)}_q \le |\Omega|^{\frac1p - \frac1q}e^{\lambda_qt}\norm{\delta u(0)}_q
\end{align*}
where $|\Omega|$ is the Lebesgue measure of $\Omega$. Consequently, restricting \eqref{eq:cauchy1} to the Banach space $L^p(\Omega) \cap L^q(\Omega)$ with norm $\norm{\cdot}_{p,q} = \max(\norm{\cdot}_p, \norm{\cdot}_q)$, we have:
\begin{align}
    \norm{\delta u(t)}_{p, q} &\le \max(1, |\Omega|^{\frac1p - \frac1q})e^{\lambda_qt}\norm{\delta u(0)}_{p, q}
\end{align}

For a further discussion of the norm-dependence of contraction rates in infinite dimension, see 3.5-3.6 \cite{banasiak2020logarithmic}.

\subsection{Contraction to invariant sets}
\label{sec:invariant_sets}

In Theorem \ref{thm:c_cont}, we used bijectively weighted spaces to derive asymptotic contraction rates for dynamics in Banach spaces. We now relax this condition to surjectively weighted spaces, showing that contraction in the induced seminorms establishes asymptotic convergence to subsets other than point-equilibria (or limiting trajectories in the nonautonomous case). 

\label{sec:subspace_contraction}
\begin{theorem}[Contraction to an invariant subspace]
\label{thm:subspace}
Let $V$ be a Banach space and $P$ a bounded linear projection on $V$ (i.e. $P^2 = P$). Let $Q = I - P$, and $M_Q[A]$ be the $Q$-weighted contraction rate. Let $\frac{d}{dt}u = f(t, u)$ and suppose that:
\begin{enumerate}
    \item $\im(P)$ is a flow-invariant subspace of $u(t)$
    \item $\sup_{t, u \in V}M_Q[Df(t, u)] = \lambda < 0$
\end{enumerate}
Then, $u(t)$ is contracting to the subspace $\im(P)$:
\begin{align*}
    \inf_{v\in \im(P)}\norm{u(t) - v} &\le e^{\lambda t}\inf_{v\in \im(P)}\norm{u_0 - v}
\end{align*}
\end{theorem}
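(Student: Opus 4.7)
The plan is to apply Theorem \ref{thm:growth} with the time- and state-invariant weight $\Theta \equiv Q$, and then lift the resulting tangent-level estimate to a finite-distance one by a virtual-displacement argument along a straight-line path of initial data joining $u_0$ to an arbitrary $v \in \im(P)$. The seminorm $\norm{Q\,\cdot\,}$ serves as the distance-to-$\im(P)$ functional, and the flow-invariance of $\im(P)$ will cause the endpoint lying in $\im(P)$ to drop out of the final bound.

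First I would check that $Q = I - P$ is admissible in the sense of Definition \ref{def:w_bsp}: it is bounded since $P$ is bounded, it is trivially $C^1$ on $\R_+ \times V$ because it is constant in $t$ and $u$, and it is surjective onto its codomain $W := \im(Q) = \ker(P)$, which is a closed (hence Banach) subspace of $V$ by continuity of $P$. With $\Theta = Q$ independent of $t$, the hypothesis $\sup_{t,u}M_Q[Df_t(u)] \le \lambda$ feeds directly into Theorem \ref{thm:growth}, so every perturbation $\delta u(t)$ along a solution satisfies
\[
    \norm{Q\delta u(t)} \;\le\; e^{\lambda t}\norm{Q\delta u(0)}.
\]

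To pass from tangent to finite distance, fix any $v \in \im(P)$ and set $u_0(s) := v + s(u_0 - v)$ for $s \in [0,1]$. By $C^1$ dependence of the flow on its initial condition, $u(s,t) := \phi(t, u_0(s))$ is jointly $C^1$, and $\delta u(s,t) := \partial_s u(s,t)$ solves the linearized equation along $u(s,\cdot)$ with $\delta u(s,0) \equiv u_0 - v$. Integrating the weighted growth bound in $s$ and commuting $Q$ with the path integral gives
\[
    \norm{Q(u(1,t) - u(0,t))} \;\le\; \int_0^1 \norm{Q\delta u(s,t)}\, ds \;\le\; e^{\lambda t}\norm{Q(u_0 - v)}.
\]
Flow-invariance of $\im(P)$ forces $u(0,t) \in \im(P)$, so $Qu(0,t) = 0$; likewise $Qv = 0$, so $\norm{Q(u_0 - v)} = \norm{Qu_0}$. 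The estimate becomes $\norm{Qu(t)} \le e^{\lambda t}\norm{Qu_0}$. Since $Pu(t) \in \im(P)$ is a valid witness on the left, $\inf_{w\in\im(P)}\norm{u(t) - w} \le \norm{Qu(t)}$, and because $\norm{Qu_0} = \norm{Q(u_0 - v)} \le \norm{Q}\norm{u_0 - v}$ for every $v \in \im(P)$, taking the infimum on the right recovers the claimed contraction of the distance-to-$\im(P)$ (with the bound sharp when $\norm{Q} = 1$, as for orthogonal projections in Hilbert space).

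The main subtlety lies in the virtual-displacement step: one must justify the interchange of $Q$ with the path integral and the uniform applicability of Theorem \ref{thm:growth} in $s$. The former is immediate from linearity and continuity of $Q$; the latter follows because the hypothesis $\sup_{t, u \in V}M_Q[Df(t,u)] \le \lambda$ is a pointwise-uniform bound, so the exponential rate does not depend on the basepoint $u(s,t)$ around which the linearization is taken, and the estimate transfers uniformly across the family of solutions $\{u(s,\cdot)\}_{s\in[0,1]}$.
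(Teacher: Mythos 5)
Your proof is correct and follows essentially the same route as the paper's: both reduce the problem to the projected variable $Qu$, using flow-invariance of $\im(P)$ to place a fixed point at $Qu=0$ and the $Q$-weighted rate to contract perturbations $\delta v = Q\delta u$, with your virtual-displacement integration in $s$ merely making explicit the differential-to-finite-distance step that the paper leaves implicit. Your remark that the argument actually yields a prefactor $\norm{Q}$ (so the displayed inequality is exact only when $\norm{Q}=1$, e.g.\ for orthogonal projections in Hilbert space) is a fair observation that applies equally to the paper's own proof.
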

\begin{proof}
$P$ gives a decomposition of $V$ into a direct sum of closed subspaces as $V = \im(P) \oplus \ker(P)$; moreover, $Q$ is also a projection, and satisfies $\im(Q) = \ker(P)$. Consider the projected system $v = Qu$ with dynamics 
\begin{align}
    \label{eq:subspace_dynamics}
    \frac{d}{dt}v &= Qf(t, u) = Qf(t, v + Pu)
\end{align}
By hypothesis:
\begin{enumerate}
    \item Since $\im(P)$ is a flow-invariant subspace of $u(t)$, we have
    \begin{align}
        \label{eq:invariant_subspace}
        Qf(t, Pv) = 0\ \forall t, v \in V
    \end{align}
    thus $v(t)$ has a fixed point at $v = 0$. 
    \item Since $u(t)$ is contracting in the $Q$-weighted space, we have
    \begin{equation}
    \begin{split}
        \label{eq:subspace_contraction}
        M_Q[f]  &= \sup_{t, u, w \in V} \frac{\left[Qw, QDf_t(u)w\right]}{\norm{Qw}^2} < 0
    \end{split}
    \end{equation}
    Then by linearity, $\delta v = Q\delta u$, and $v(t)$ is contracting; that is, a perturbation $\delta v$ with dynamics
    \begin{align}
    \label{eq:conj2}
    \frac{d}{dt}\delta v = Q Df_t(u)\delta v
    \end{align}
    is exponentially stable about $0$.
\end{enumerate}
Thus, all solutions $v(t)$ of \eqref{eq:subspace_dynamics} approach the fixed point $0$ exponentially, and $u(t)$ is contracting to the subspace $\im(P)$.
\end{proof} 
Note that in the $L^2$ case, if $Q = A^*A$ such that $AA^* = I$, then \eqref{eq:subspace_contraction} reduces to: 
\begin{align*}
    M_Q[Df_t(u)] &= \sup_{w \in V} \frac{\Re \langle Qw, QDf_t(u)w\rangle}{\langle Qw, Qw\rangle} = \sup \Re[\Spec(QDf_t(u)Q^*)]
\end{align*}
which reduces to the prior known condition in $\R^n$ given by \cite{pham2007stable} $QDf_t(u)Q^\intercal < 0$. On the other hand, result \eqref{eq:subspace_contraction} does not require a notion of orthogonality or self-duality.

We can further relax condition \eqref{eq:subspace_contraction} to an asymptotic contraction. Let $\Theta(t, v) : C^1(\R^+\times V, GL(V))$ be a uniformly bounded family of operators as in Theorem \ref{thm:c_cont}; if the dynamics are contracting in the inner-$\Theta$, outer-Q-weighted norm:
\begin{align}
    \label{eq:weighted_subspace_contraction}
    \sup_{t, u\in V} M_{Q\Theta(t,u)}[Df_t(u)] < 0
\end{align}
then by Theorem \ref{thm:c_cont}, \eqref{eq:conj2} is asymptotically contracting to $0$, and $u(t)$ is asymptotically contracting to the subspace $\im(P)$.

\subsection{Contraction to submanifolds}
\label{sec:manifold_contraction}
We now give sufficient conditions for contraction to Hilbert and Banach submanifolds. 

As calculating the (weighted) contraction rate is often easier in $L^2$ (see Appendix for $L^p$ contraction estimates from $L^2$); the below result along with a suitable embedding of a Hilbert submanifold $\mathcal{M}$ may be used to obtain asymptotic convergence to submanifolds in $L^p$. We subsequently give the general result for Banach submanifolds.

\begin{theorem}[Contraction to an invariant Hilbert submanifold]
\label{thm:hbm_contraction}
Let $\phi \in C^\infty(V, W)$ be a smooth submersion of a Hilbert space $V$ into another $W$ (i.e. for all $u \in V$, $D\phi(u)$ is surjective \cite{lang2012differential}).
$\mathcal{M}  = \phi^{-1}(0)$ defines a $\dim(W)$-submanifold of $V$. 

Define the surjective weight $\Theta(u) = D\phi(u)$ and let $M_{\Theta(u)}$ be the weighted contraction rate in $L^2$ (as in \ref{def:weighted_rate}):
\begin{align}
    \label{eq:manifold_weighted_ip}
    M_{\Theta(u)}[A] &= \sup_{v \ne 0, v \in V}\frac{\Re\langle\Theta(u)v, \Theta(u)Av\rangle_W}{\norm{\Theta(u)v}_W^2}
\end{align}

Let $\frac{d}{dt}u = f(t, u)$ be a differentiable vector field on $V$, and suppose that:
\begin{enumerate}
    \item $\forall u \in \mathcal{M}, t \ge 0$, we have $D\phi(u)f(t, u) = 0$
    \item $u(t)$ is differentially contracting in the $\phi$-weighted inner product \eqref{eq:manifold_weighted_ip}:
    $$
    \sup_{t \ge 0, u \in V}M_{\Theta(u)}[Df_t(u)] = \lambda < 0
    $$
\end{enumerate}
Then $u(t)$ asymptotically approaches the manifold $\mathcal{M}$.
\end{theorem}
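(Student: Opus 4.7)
\emph{Proof plan.} The strategy parallels Theorem \ref{thm:subspace}: track the transverse coordinate $y(t) := \phi(u(t))$, which vanishes precisely on $\mathcal{M}$. By the chain rule $\frac{d}{dt}y = D\phi(u(t))f(t,u(t))$, and hypothesis (1) ensures $y \equiv 0$ along any trajectory started in $\mathcal{M}$, so $\mathcal{M}$ is flow-invariant. Hypothesis (2), fed into Theorem \ref{thm:growth} with the time-invariant weight $\Theta(u) = D\phi(u)$ (which satisfies the uniform bound required by Definition \ref{def:w_bsp} implicitly), bounds perturbations after projection through $D\phi$:
\begin{align*}
    \norm{D\phi(\gamma(t))\delta u(t)}_W \le e^{\lambda t}\norm{D\phi(\gamma(0))\delta u(0)}_W
\end{align*}
along any trajectory $\gamma(t)$ carrying a perturbation $\delta u(t)$ solving $\frac{d}{dt}\delta u = Df_t(\gamma)\delta u$.

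To promote this linearized estimate into exponential decay of $\phi(u(t))$ itself, I would use a homotopy argument connecting $u(t)$ to the invariant set. Pick any $\bar u_0 \in \mathcal{M}$, form the straight-line interpolation $\gamma_s(0) = \bar u_0 + s(u_0 - \bar u_0)$ for $s \in [0,1]$, and let $\gamma_s(t)$ denote the resulting solutions of \eqref{eq:cauchy1}. By $C^1$-dependence on initial data, $\delta u_s(t) := \partial_s \gamma_s(t)$ satisfies the linearized dynamics along $\gamma_s$ with $\delta u_s(0) = u_0 - \bar u_0$. Because $\phi(\gamma_0(t)) \equiv 0$ by invariance, the fundamental theorem of calculus gives
\begin{align*}
    \phi(u(t)) \;=\; \phi(\gamma_1(t)) - \phi(\gamma_0(t)) \;=\; \int_0^1 D\phi(\gamma_s(t))\,\delta u_s(t)\,ds.
\end{align*}
Taking $W$-norms inside the integral, applying the perturbation bound above to each $s$, and absorbing the uniform bound $B_1 := \sup_u\norm{D\phi(u)}$ yields
\begin{align*}
    \norm{\phi(u(t))}_W \;\le\; e^{\lambda t}\int_0^1 \norm{D\phi(\gamma_s(0))(u_0 - \bar u_0)}_W\,ds \;\le\; B_1 e^{\lambda t}\norm{u_0 - \bar u_0}_V.
\end{align*}
Taking the infimum over $\bar u_0 \in \mathcal{M}$ gives $\norm{\phi(u(t))}_W \le B_1 e^{\lambda t}\,\mathrm{dist}(u_0, \mathcal{M})$, which decays to zero exponentially at rate $\lambda$.

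The remaining step, converting exponential decay of $\norm{\phi(u(t))}_W$ into decay of $\mathrm{dist}(u(t), \mathcal{M})$, is the main obstacle and the only place where the submersion hypothesis is essential beyond producing a weight. Since $D\phi(u)$ is surjective and bounded, an open-mapping/implicit-function argument, or equivalently a Newton iteration using a bounded right-inverse of $D\phi$, supplies a tubular neighborhood of $\mathcal{M}$ on which $\mathrm{dist}(u, \mathcal{M}) \le C\norm{\phi(u)}_W$ for a uniform constant $C$; in finite dimensions restricted to a compact set this comparability is automatic, while in the Hilbert setting it requires either a uniform lower bound on the smallest singular value of $D\phi$ (a uniformly bounded pseudo-inverse) or a priori confinement of the tail of $u(t)$ to such a neighborhood, which is itself a consequence of $\norm{\phi(u(t))}_W$ eventually becoming smaller than the tubular thickness. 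Combining this comparability with the preceding display gives $\mathrm{dist}(u(t),\mathcal{M}) \to 0$, as claimed.
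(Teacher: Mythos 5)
Your proof is correct and follows essentially the same route as the paper's: both reduce the claim to showing that $v(t) = \phi(u(t))$ contracts to the particular solution $v \equiv 0$ guaranteed by the invariance hypothesis (1), using the $D\phi$-weighted rate to control the transported perturbation $\delta v = D\phi(u)\delta u$. The two places where you are more explicit --- the homotopy integration from a base point $\bar u_0 \in \mathcal{M}$ that upgrades the differential estimate to a bound on $\norm{\phi(u(t))}_W$, and the final comparability between $\norm{\phi(u)}_W$ and the distance to $\mathcal{M}$ --- are left implicit in the paper, which simply identifies ``$u(t)$ approaches $\mathcal{M}$'' with ``$\phi(u(t)) \to 0$''.
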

\begin{proof}
First, we note that since $\phi$ is a submersion, quantity \eqref{eq:manifold_weighted_ip} is well-defined. We show that $u(t)\to \mathcal{M}$ by showing that $\phi(u(t))$ is contracting to $0$. By hypotheses, 
\begin{enumerate}
    \item $\phi(u(t))$ has a fixed point at $0$, by either of the following equivalent conditions (compare to the linear version \eqref{eq:subspace_dynamics}):
    \begin{align}
    \label{eq:invariant_manifold}
    \forall u \in \mathcal{M}, t \ge 0,\quad 
    \begin{cases}
        \frac{d}{dt}\phi(u) = D\phi(u)f(t, u) = 0\\
        f(t, u) \in T_u\mathcal{M}  = \ker(D\phi(u)) 
    \end{cases}
    \end{align}
    This states that the dynamics are tangent to $\mathcal{M}$, i.e. $\mathcal{M}$ is an invariant manifold of $v(t)$.

    \item Let $v = \phi(u)$; a perturbation $\delta u \in T_u V$ propagates as $\delta v = D\phi(u)\delta u$. Then, 
    \begin{align}
        \label{eq:manifold_projection}
        \frac{d}{dt}\frac12\norm{\delta v}_W^2 &= \Re \left\langle D\phi(u)\delta u, D\phi(u)Df_t(u)\delta u\right\rangle_W
    \end{align}
    
     By substitution of \eqref{eq:manifold_weighted_ip} into \eqref{eq:manifold_projection}, 
    \begin{equation}
    \begin{split}
        \label{eq:manifold_contraction}
        \frac{d}{dt}\norm{\delta v}^2 &= \frac{2\Re \langle\delta u, Df_t(u)\delta u\rangle_{\phi}}{\langle \delta u, \delta u\rangle_{\phi}}\norm{\delta v}^2 \le 2M^{\phi}(Df_t(u)) \norm{\delta v}^2\\
        &\le 2\lambda \norm{\delta v}^2
    \end{split}
    \end{equation}
\end{enumerate}

Together, \eqref{eq:invariant_manifold} and \eqref{eq:manifold_contraction} imply that $\phi(u(t)) \to 0$ exponentially and thus $u(t)$ is contracting to the submanifold $\mathcal{M} = \phi^{-1}(0)$.
\end{proof}

This can be generalized to Banach submanifolds by use of the semi-inner product as in Theorem \ref{thm:growth}. 

\begin{theorem}[Contraction to an invariant Banach submanifold]
\label{thm:banach_submanifold_contraction}
Let $V, W$ be Banach spaces and $\phi : V \to W$ define a $\dim(W)$-submanifold as in \ref{thm:hbm_contraction}. Let $\Theta = D\phi$, and correspondingly define the $\Theta$-weighted contraction rate as in Definition \ref{def:weighted_rate}:
\begin{align}
    \label{eq:bsp_weighted_norm}
    M_{\Theta(u)}[A] &= \sup_{v \ne 0, v \in V}\frac{[D\phi(u)v, D\phi(u)Av]}{\norm{D\phi(u)v}_W^2}
\end{align}
Let $\frac{d}{dt}{u} = f(t,u)$ be a Cauchy problem in $V$, and suppose that:
\begin{enumerate}
    \item $\forall u \in M, t \ge 0$, $D\phi(u)f(t, u) = 0$
    \item $u(t)$ is contracting in the $\phi$-weighted \textit{semi}-inner product \eqref{eq:bsp_weighted_norm}:
    $$
    \sup_{t \ge 0, u \in V}M_{\Theta(u)}[Df_t(u)] \le \lambda < 0
    $$
\end{enumerate}
Then $u(t) \to \mathcal{M} = \phi^{-1}(0)$ asymptotically.
\end{theorem}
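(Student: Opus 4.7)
The plan is to mirror the argument of Theorem \ref{thm:hbm_contraction} almost verbatim, replacing the Hilbert inner product $\langle \cdot,\cdot\rangle_W$ by the Lumer--Giles semi-inner product $[\cdot,\cdot]_W$ on $W$ wherever the norm of $W$ is differentiated. First I would verify that $\Theta(u) := D\phi(u)$ is an admissible surjective weight in the sense of Definition \ref{def:w_bsp}: pointwise surjectivity is exactly the submersion hypothesis, and smoothness of $\phi$ makes $\Theta$ at least $C^1$, so the weighted semi-inner product $[D\phi(u)\cdot,D\phi(u)\cdot]_W$ is well defined and \eqref{eq:bsp_weighted_norm} is a legitimate instance of Definition \ref{def:weighted_rate}.

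Next, hypothesis (1) states that $f(t,\cdot)$ is tangent to $\mathcal{M}=\phi^{-1}(0)$ along $\mathcal{M}$, so $\phi(u(t))\equiv 0$ whenever $u_0\in\mathcal{M}$; hence $\mathcal{M}$ is forward-invariant. Applying Theorem \ref{thm:growth} with the surjective weight $\Theta=D\phi$ and hypothesis (2), any perturbation $\delta u(t)$ solving $\frac{d}{dt}\delta u = Df_t(u)\delta u$ satisfies
\begin{align*}
    \norm{D\phi(u(t))\delta u(t)}_W \le e^{\lambda t}\norm{D\phi(u_0)\delta u_0}_W.
\end{align*}
This is the differential ``$\phi$-contraction'' statement; the estimate degenerates in directions of $\ker D\phi(u)$, the tangent directions to the leaves of $\phi$, which is the intended behavior.

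To pass from a differential to a finite-distance estimate, I would fix the trajectory $u(t)$ of interest and a reference trajectory $\tilde u(t)$ starting at some $\tilde u_0\in\mathcal{M}$, and join $\tilde u_0$ to $u_0$ by a $C^1$ family of initial conditions $\gamma(s,0)$, $s\in[0,1]$. By $C^1$ dependence of the semiflow on initial data (standard since $f\in C^1$), $s\mapsto \gamma(s,t)$ is $C^1$ and $\partial_s\gamma(s,t)$ solves the linearized equation along $\gamma(s,\cdot)$. Using the fundamental theorem of calculus and invariance $\phi(\tilde u(t))=0$,
\begin{align*}
    \norm{\phi(u(t))}_W = \norm[\bigg]{\int_0^1 D\phi(\gamma(s,t))\,\partial_s\gamma(s,t)\,ds}_W \le e^{\lambda t}\int_0^1 \norm{D\phi(\gamma(s,0))\,\partial_s\gamma(s,0)}_W\,ds,
\end{align*}
which tends to $0$, giving the asymptotic approach $u(t)\to\mathcal{M}$ as measured by $\norm{\phi(u(t))}_W$.

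The main obstacle will be the passage from the pointwise-in-$s$ differential bound to a global estimate: because $[\cdot,\cdot]_W$ is only subadditive in its second argument (not bilinear), one cannot simply integrate the weighted norm inequality as in a Hilbert setting, so the argument must proceed by applying Theorem \ref{thm:growth} separately at each $s$ and then using the ordinary triangle inequality in $W$ as above. One also needs a suitable $\tilde u_0\in\mathcal{M}$ joinable to $u_0$ by a path with finite integral of $\norm{D\phi(\gamma)\partial_s\gamma}_W$; local surjectivity of $\phi$ (from the submersion hypothesis and the Banach implicit function theorem) together with uniform boundedness of $D\phi$ on bounded sets, or simply a standing assumption that $u_0$ lies in a neighborhood of $\mathcal{M}$ along which such a path exists, makes this step routine.
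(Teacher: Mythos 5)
Your proposal follows essentially the same route as the paper: both reduce the problem to showing that $v(t)=\phi(u(t))$ is differentially contracting via the $D\phi$-weighted semi-inner product and Theorem \ref{thm:growth}, and then use invariance of $\mathcal{M}$ (so that $v\equiv 0$ is a particular solution) to conclude $\phi(u(t))\to 0$. The only difference is that you spell out the passage from the differential estimate to the finite-distance one by integrating $D\phi(\gamma(s,t))\,\partial_s\gamma(s,t)$ over a path of initial conditions ending on $\mathcal{M}$, a step the paper's proof leaves implicit when it asserts ``$v(t)$ is contracting''; this is a legitimate and in fact more careful rendering of the same argument, since $v$ does not satisfy a closed evolution equation of its own.
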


\begin{proof}
Let $u(t)$ be a solution, $v(t) = \phi(u(t))$, and $\delta u, \delta v$ be perturbations as before. Taking the Gateaux derivative of the norm of a perturbation $\delta v(t)$ and applying Theorem \ref{thm:growth} and hypothesis (2),
\begin{equation}
\begin{split}
    \label{eq:banach_manifold_contraction}
    d^+_t \norm{\delta v} &= \frac{\left[\delta v, \frac{d}{dt}\delta v\right]}{\norm{\delta v}^2} \norm{\delta v} = \frac{\left[D\phi(u) \delta u, D\phi(u)Df_t(u)\delta u\right]}{\norm{D\phi(u)\delta u}^2} \norm{\delta v} \\
    & \le M_{\Theta(u)}[Df_t(u)]\norm{\delta v} \le \lambda \norm{\delta v}
\end{split}
\end{equation}
Thus, $v(t)$ is contracting. By hypothesis (1), $\mathcal{M}$ is an invariant manifold and hence $v(t) = 0$ is a particular solution. Therefore, $v(t) = \phi(u(t)) \to 0$  and $u(t)$ asymptotically converges to the submanifold $\mathcal{M} = \phi^{-1}(0)$.
\end{proof}

Condition \eqref{eq:banach_manifold_contraction} can also be extended using weighted spaces \eqref{eq:weighted_sip}. We can apply an \textit{outer} (time-invariant) weight $\Theta(w)$, for $w = v(t)$, to the $\delta v$ dynamics, and define the corresponding weighted contraction rate:
\begin{align*}
    M_{\Theta(w)D\phi(u)}[A] &= \sup_{v \ne 0, v \in V}\frac{[\Theta(w) D\phi(u) v, \Theta(w)D\phi(u)Av]}{\norm{\Theta(w)D\phi(u)v}^2} 
\end{align*}
Then if $M_{\Theta(w)D\phi(u)}[A] < 0$ uniformly, $v(t)$ is asymptotically contracting to $0$ by Theorem \ref{thm:c_cont} and the fact that $v(t) = 0$ is a particular solution. Thus, $u(t)$ is asymptotically contracting to $\mathcal{M}$.

We note that both in the cases of contraction to subspaces \eqref{eq:subspace_contraction} and submanifolds \eqref{eq:banach_manifold_contraction}, the sufficient condition is a negative contraction rate in a surjectively weighted space; the difference between the ``linear'' and ``nonlinear'' setting is a constant versus a space-varying weight.




\section{Application: convergence to symmetric solutions}
\label{sec:symmetries}

We now consider integral curves of vector fields which are contracting in some weighted Banach space and are simultaneously equivariant with respect to a group action. We show that limiting trajectories are then group-invariant. These notions were developed by \cite{russo2011symmetries} for $\R^n$, which we extend using weighted semi-inner products to dynamics in Banach spaces.

\subsection{Spatial symmetries}
\label{sec:spatial_linear}
\begin{theorem}
Let $\frac{d}{dt}u = f(t, u)$ in a Banach space $V$, and $\Gamma \subseteq GL(V)$ a subgroup of bijective bounded linear operators on $V$ such that $f$ is $\Gamma$-equivariant in space:
\begin{align}
\label{eq:spatial1}
\forall T \in \Gamma, u \in V, t \ge 0,\quad f(t, Tu) = Tf(t, u)
\end{align}
If $f$ is contracting with respect to some weighted semi-inner product $M^\Theta$, then $u(t)$ approaches a single $\Gamma$-invariant vector exponentially fast.
\end{theorem}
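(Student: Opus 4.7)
The approach is a symmetry-induced virtual-system argument. Fix $T \in \Gamma$ and set $v(t) := Tu(t)$. Linearity of $T$ and the equivariance \eqref{eq:spatial1} give
\begin{equation*}
\frac{d}{dt}v(t) = T\frac{d}{dt}u(t) = Tf(t,u(t)) = f(t,Tu(t)) = f(t,v(t)),
\end{equation*}
so $v(t)$ is itself a solution of \eqref{eq:cauchy1}, with initial datum $Tu_0$. In other words, every element of $\Gamma$ sends solutions to solutions.

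Next I invoke the contraction hypothesis. Since $f$ is contracting in some weighted semi-inner product $M^\Theta$, Theorem \ref{thm:c_cont}(2)--(3) upgrades this to an asymptotic contraction in the norm on $V$: there exist $\lambda < 0$ and constants $B_1, B_2$ (from the uniform bounds on $\Theta$ and $\Theta^{-1}$) such that any two solutions $u_1, u_2$ satisfy
\begin{equation*}
\|u_1(t) - u_2(t)\|_V \le B_1 B_2\, e^{\lambda t}\, \|u_1(0) - u_2(0)\|_V.
\end{equation*}
Specialising to the pair $u(t)$ and $Tu(t)$ yields $\|u(t) - Tu(t)\|_V \to 0$ exponentially fast, simultaneously for every $T \in \Gamma$.

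To produce a single limiting vector, I would invoke the asymptotic contraction mapping principle of \cite{kirk2003fixed}, cited after Definition \ref{def:ac}, which supplies a unique fixed point $u^* \in V$ to which every orbit converges exponentially. Equivariance then forces $f(Tu^*) = Tf(u^*) = 0$, so $Tu^*$ is also a fixed point; uniqueness implies $Tu^* = u^*$ for every $T \in \Gamma$, i.e.\ $u^*$ is $\Gamma$-invariant, and the convergence $\|u(t) - u^*\|_V \le B_1 B_2 e^{\lambda t}\|u_0 - u^*\|_V$ follows from the same contraction estimate.

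The main obstacle is the gap between an autonomous and non-autonomous reading of $f(t,u)$: contraction alone produces only a limiting \emph{trajectory}, not a single point. I therefore expect the statement to tacitly require either autonomy or an asymptotic stabilization (e.g.\ $f(t,\cdot) \to f_\infty(\cdot)$ with $f_\infty$ $\Gamma$-equivariant and a bounded orbit), so that Kirk's theorem delivers a vector whose $\Gamma$-invariance then follows automatically from uniqueness of the fixed point. A secondary subtlety is that, because $\Theta$ itself need not be $\Gamma$-equivariant, one cannot directly push the weighted bound through $T$; it is the \emph{asymptotic} norm bound produced by Theorem \ref{thm:c_cont} that is used in the comparison of $u(t)$ with $Tu(t)$.
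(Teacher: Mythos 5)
Your proposal follows essentially the same route as the paper: show that equivariance makes each $T \in \Gamma$ send solutions to solutions, then use the weighted contraction hypothesis (via Theorem \ref{thm:c_cont}) to collapse the whole solution set, including the pair $u(t)$ and $Tu(t)$, onto a single limit, which is therefore $\Gamma$-invariant. The obstacle you flag at the end is real but is shared by the paper itself: its proof in fact only concludes convergence to a single limiting \emph{solution} $u_*(t)$ (a trajectory), and upgrading this to a fixed \emph{vector} via the asymptotic fixed-point theorem of \cite{kirk2003fixed} requires, as you note, autonomy (or asymptotic autonomy) together with a bounded orbit --- hypotheses the theorem statement leaves implicit.
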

\begin{proof}
Define the set of solutions at time $t$ as $S(t) = \{\phi(t, u)\ |\ u \in V\}$, where $\phi$ is the propagator generating solutions to \eqref{eq:cauchy1} as in Definition \ref{def:ac}. By the equivariance \eqref{eq:spatial1}, we have
$$
\frac{d}{dt}u = f(t, u) \implies T\frac{d}{dt}u = Tf(t, u) = f(t, Tu)
$$
implying that $u \in S(t) \implies Tu \in S(t)$, thus $S(t)$ is $\Gamma$-invariant for all $t$. 

By the contraction hypothesis, $S(t)$ is then contracting after a finite overshoot to a single solution $u_*(t)$ which is $\Gamma$-invariant. 
\end{proof}

\subsubsection{Nonlinear symmetries}
\label{sec:nonlinear_symmetries}
The previous result extends straightforwardly to non-linear symmetries.

Suppose $\Gamma \subset C^1(X)$ is more generally a group of diffeomorphisms, such that $f$ is \textit{differentially} equivariant:
\begin{align}
\label{eq:nonlinear_equivariance}
\forall h \in \Gamma, u \in V, t \ge 0,\quad f(t, h(u)) = Dh(u)f(t, u)
\end{align}
(We note that this equivariance condition \eqref{eq:nonlinear_equivariance} also appears for $\R^n$ in an un-published manuscript by \cite{unpub_boffi}.)
Then, for any $h \in \Gamma$ we have 
$$
\frac{d}{dt}h(u) = Dh(u)\frac{d}{dt}u = f(t, h(u))
$$
Thus $S(t)$ is $\Gamma$-invariant. Finally, if $M_\Theta[f] < 0$ for some weight $\Theta$, 
$S(t)$ approaches a single solution $u_*(t)$ which is a fixed point of each $h \in \Gamma$.

\subsubsection{Contraction to a $\Gamma$-invariant subspace} 
Suppose we only wish to show that $u(t)$ approaches \textit{some} $\Gamma$-invariant solution rather than a unique one; we apply contraction in a weighted semi-inner product to do so.
\begin{theorem}
Let $\Gamma \subseteq GL(V)$ and $\frac{d}{dt}u = f(t, u)$ be $\Gamma$-equivariant as in \ref{sec:spatial_linear}. Define the $\Gamma$-invariant subspace:
\begin{align*}
    V_\Gamma = \{u \in V\ |\ Tu = u\ \forall T \in \Gamma\}
\end{align*}
and suppose that $\dim(V_\Gamma) < \infty$. Then, there exists a weighted space $V_\Theta$ such that if $M_\Theta[f] < 0$, solutions $u(t)$ asymptotically approach the $\Gamma$-invariant subspace $V_\Gamma$.
\end{theorem}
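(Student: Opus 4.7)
The plan is to apply the contraction-to-invariant-subspace result, Theorem \ref{thm:subspace}, with $P$ the bounded projection onto $V_\Gamma$ and weight $\Theta := Q = I - P$. Theorem \ref{thm:subspace} then delivers exponential convergence of $u(t)$ to $\im(P) = V_\Gamma$ as soon as $M_\Theta[f] < 0$, matching the claim. So the task reduces to constructing such a $P$ and checking the two hypotheses of Theorem \ref{thm:subspace}.

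First I would verify that $V_\Gamma$ is flow-invariant. Given $u \in V_\Gamma$ and any $T \in \Gamma$, linear equivariance \eqref{eq:spatial1} gives $f(t, u) = f(t, Tu) = T f(t, u)$, so $f(t, u) \in V_\Gamma$. Hence solutions starting in $V_\Gamma$ remain there, and the vector field is tangent to $V_\Gamma$ as required.

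Next I would construct the bounded projection. Since each $T \in \Gamma$ is bounded, $\ker(T - I)$ is closed in $V$, and therefore $V_\Gamma = \bigcap_{T \in \Gamma} \ker(T - I)$ is a closed subspace. By hypothesis it is finite-dimensional: fix a basis $e_1, \ldots, e_n$ of $V_\Gamma$, extend the dual coordinate functionals on $V_\Gamma$ to $\phi_1, \ldots, \phi_n \in V^*$ via the Hahn-Banach theorem, and set $Pu := \sum_{i=1}^n \phi_i(u)\, e_i$. This is a bounded linear idempotent with $\im(P) = V_\Gamma$, and $\Theta := Q = I - P$ is a bounded surjection onto $\ker(P)$, hence an admissible (constant, space- and time-invariant) weight per Definition \ref{def:w_bsp}.

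With $V_\Gamma$ flow-invariant and $\Theta$ in hand, both hypotheses of Theorem \ref{thm:subspace} are met, and negativity of $M_\Theta[f]$ gives $\inf_{v \in V_\Gamma} \norm{u(t) - v} \to 0$ exponentially as desired. The only mildly delicate step is the construction of $P$: it rests entirely on the finite-dimensional hypothesis, since closed finite-dimensional subspaces of any Banach space are complemented by Hahn-Banach, whereas without $\dim(V_\Gamma) < \infty$ one would need an independent complementability assumption on $V_\Gamma$. This is the only reason the finite-dimensional hypothesis appears in the statement.
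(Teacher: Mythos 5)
Your proposal is correct and follows essentially the same route as the paper: Hahn--Banach gives a bounded projection $P$ onto the finite-dimensional $V_\Gamma$, linear equivariance shows $Qf(t,Pu)=0$ so that $V_\Gamma$ is flow-invariant, and Theorem \ref{thm:subspace} with the constant weight $\Theta = Q = I-P$ finishes the argument. Your explicit construction of $P$ from extended coordinate functionals simply spells out the complementation step the paper delegates to a citation.
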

\begin{proof}
By the Hahn-Banach theorem, $\dim(V_\Gamma) < \infty)$ guarantees the existence of a bounded linear projection operator $P$ with $\im(P) = V_\Gamma$ (see \cite{randrianantoanina2001norm}). By $\Gamma$-equivariance, we have that:
\begin{align*}
    \forall T \in \Gamma, u \in V, t \ge 0,\quad Tf(t, Pu) &= f(t, TPu) = F(t, Pu)
\end{align*}
implying that $QF(t, Pu) = 0$ for all $t,u$ (where $Q = I - P$). Thus $V_\Gamma$ is a flow-invariant subspace of $u(t)$. 

Finally, applying Theorem \ref{thm:subspace}, contraction in the weighted space $\Theta(t, u) = Q$, with $M_{Q}[f] < 0$ implies that $u(t)$ is contracting to $V_\Gamma$. 
\end{proof}

The above result also extends to asymptotic contractions to $V_\Gamma$ by further pre-multiplicative weights on $Qu(t)$ as in Equation \eqref{eq:weighted_subspace_contraction}.

\begin{example}[Periodic heat equation]
Let $\partial_{t} u = \Delta u$ on the torus $\T^n = \R^n/\Z^n$, and $\Gamma$ be a group of bounded linear translation operators. We have $\Delta T = T\Delta$ for all $T \in \Gamma$, and $\mu(\Delta) < 0$ in $L^2_0(\T^n)$, the set of mass-zero square-integrable functions (see Proposition 6.1, \cite{soderlind2006logarithmic} for an estimation of the contraction rate using Sobolev inequalities); thus $u$ tends to a $\Gamma$-invariant solution. Since $\Gamma$ was arbitrary, this is an alternate proof that harmonics on $\T^n$ are translation-invariant.  
\end{example}




\subsection{Temporal symmetries}
\begin{theorem}
\label{thm:spatiotemporal}
Let $\frac{d}{dt}u = f(t, u)$ and $\tau > 0$ such that $f$ is $\tau$-periodic:
\begin{align}
\label{eq:fperiodic}
    \forall t \ge 0,\ u \in V,\quad f(t, u) &= f(t + \tau, u)
\end{align}
If $f$ is contracting in some weighted norm,
\begin{align}
\label{eq:spatiotemporal_contraction}
M_\Theta[f] &\le \lambda < 0
\end{align}
then solutions $u(t)$ exponentially approach a $\tau$-periodic function.
\end{theorem}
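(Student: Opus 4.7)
The plan is to reduce the non-autonomous $\tau$-periodic problem to a fixed-point problem for the period map. Let $\phi$ denote the propagator of \eqref{eq:cauchy1} and define $P : V \to V$ by $P(u_0) = \phi(\tau, u_0)$. The $\tau$-periodicity \eqref{eq:fperiodic} of $f$ together with uniqueness of solutions gives the identity $\phi(t + \tau, u_0) = \phi(t, P(u_0))$ for all $t \ge 0$, so that fixed points of $P$ correspond exactly to $\tau$-periodic orbits of \eqref{eq:cauchy1}, and the iterates satisfy $P^n(u_0) = \phi(n\tau, u_0)$.

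First I would invoke Theorem \ref{thm:c_cont}(2) under hypothesis \eqref{eq:spatiotemporal_contraction} to obtain
\begin{align*}
\|P^n(u_0) - P^n(v_0)\| = \|\phi(n\tau, u_0) - \phi(n\tau, v_0)\| \le B_1 B_2 \, e^{n\lambda\tau}\|u_0 - v_0\|,
\end{align*}
where $B_1 B_2$ is the condition-number bound of the chosen weight $\Theta$. Since $\lambda < 0$, choose $N$ large enough that $B_1 B_2 \, e^{N\lambda\tau} < 1$; then $P^N$ is a strict contraction on the Banach space $V$.

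By Banach's fixed-point theorem, $P^N$ admits a unique fixed point $u^* \in V$. Because $P$ commutes with $P^N$, the element $P(u^*)$ is also a fixed point of $P^N$, and uniqueness forces $P(u^*) = u^*$. The orbit $u^*(t) := \phi(t, u^*)$ is therefore $\tau$-periodic, since $\phi(t+\tau, u^*) = \phi(t, P(u^*)) = \phi(t, u^*)$. Finally, applying Theorem \ref{thm:c_cont}(2) once more against this distinguished solution yields
\begin{align*}
\|\phi(t, u_0) - u^*(t)\| \le B_1 B_2 \, e^{\lambda t}\|u_0 - u^*\|
\end{align*}
for arbitrary initial data $u_0$, which is the desired exponential convergence to the $\tau$-periodic solution.

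The main obstacle is that a single application of the contraction estimate inflates distances by the prefactor $B_1 B_2 \ge 1$, so $P$ itself need not be a strict contraction on $V$. The resolution, as above, is that this prefactor arises from the time-independent condition number and therefore does \emph{not} compound under iteration of $P$, whereas the decay factor $e^{n\lambda\tau}$ does, and eventually beats it. A secondary technical point is global existence of $\phi$ on $[0,\infty)\times V$, implicit in the use of the propagator; if needed it follows from the uniform one-sided Lipschitz bound implied by \eqref{eq:spatiotemporal_contraction}, which prevents finite-time separation of nearby solutions.
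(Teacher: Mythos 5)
Your proposal is correct, and it reaches the conclusion by a genuinely different route than the paper. The paper fixes an arbitrary solution $u(t)$, observes (exactly as you do) that $\tau$-periodicity of $f$ makes every time-shift $u(\cdot + n\tau)$ again a solution, and then applies the exponential estimate of Theorem \ref{thm:c_cont}(2) to the pair $u(\cdot+m\tau)$, $u(\cdot+n\tau)$ to show $\{u(t+n\tau)\}_n$ is Cauchy; completeness of $V$ then yields a $\tau$-periodic limit function. You instead package the same two ingredients into the classical Poincar\'e-map argument: the identity $\phi(t+\tau,u_0)=\phi(t,P(u_0))$ reduces everything to the period map $P$, the estimate $\norm{P^n u_0 - P^n v_0}\le B_1B_2e^{n\lambda\tau}\norm{u_0-v_0}$ shows some iterate $P^N$ is a strict contraction, and Banach's fixed-point theorem plus the commutation trick produces the fixed point of $P$ itself. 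Your version buys something the paper's does not make explicit: the limit is manifestly a genuine $\tau$-periodic \emph{trajectory} $u^*(t)=\phi(t,u^*)$ of the equation, whereas the paper's Cauchy-sequence limit $p(t)=\lim_n u(t+n\tau)$ is $\tau$-periodic but is not verified to solve \eqref{eq:cauchy1} (that would require continuity of the propagator to pass to the limit). The paper's route is marginally more elementary in that it uses only completeness rather than the fixed-point theorem. Both arguments correctly neutralize the prefactor $B_1B_2\ge 1$ that prevents $P$ from being a contraction outright --- the paper by absorbing it into the constant $C$ in front of a decaying exponential, you by iterating until $e^{N\lambda\tau}$ beats it --- and your closing remarks on global existence of the propagator address the one implicit assumption both proofs share.
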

\begin{proof}
By the symmetry \eqref{eq:fperiodic}, if $u(t)$ is a solution, then for $t \ge 0$,
$$
\frac{d}{dt}u(t+\tau) = f(t + \tau, u(t+\tau)) = f(t, u(t+\tau))
$$
thus $u(t+\tau)$ is also a solution, and $S(t)$ is $\tau$-translation invariant (letting $S(t)$ be the set of solutions at time $t$ as in \ref{sec:spatial_linear}).

By the contraction hypothesis and Theorem \ref{thm:c_cont}, $S(t)$ exponentially approaches a single solution. That is, there exists $C > 0$  such that for any solution $u(t)$ and $m, n \in \N$ we have:
$$
\norm{u(t + m\tau) - u(t + n\tau)} \le Ce^{\lambda \min(m, n)\tau}\norm{u(t + \tau) - u(t)}
$$
thus $\{u(t + n\tau)\}_n$ is a Cauchy sequence. Since $V$ is a Banach space and thus complete, this implies $\lim_{n\to\infty}u(t + n\tau) = p$ for some $p \in V$. As this is true for arbitrary $t$, we have that $u(t)$ approaches a $\tau$-periodic function, which is approached exponentially by all initial conditions.
\end{proof}

\textbf{Remark.} The $\tau$-periodicity of the vector field may be shown using spatio-temporal symmetries of $f$, e.g.:
\begin{itemize}
    \item (Linear symmetry) A cyclic subgroup $\Gamma \subseteq GL(V)$ of order $k$ generated by $T \in GL(V)$ such that $T^k = I$, and $\Delta t > 0$ such that:
    \begin{align*}
        u \in V, t \ge 0,\quad f(t, Tu) = Tf(t+\Delta t, u)
    \end{align*}
    then $\tau = k\Delta t$, as used in Theorem 5, \cite{russo2011symmetries}.
    \item (Nonlinear symmetry) A diffeomorphism $h \in C^1(V,V)$ generating a cyclic group of maps of order $k$ by the fixed point condition $(h \circ ... \circ h)(u) =: h^k(u) = u$ for all $u \in V$, and $\Delta t > 0$ such that:
    \begin{align*}
        \forall n \le k,\ u \in V,\ t \ge 0,\quad f(t, h^n(u)) = Dh^n(u)f(t + n\Delta t, u)
    \end{align*}
    where similarly $\tau = k\Delta t$.
\end{itemize}

\subsection{Limit cycle analysis}
We now combine the previous results on symmetries (Section \ref{sec:nonlinear_symmetries}) with criteria for convergence to submanifolds (Section \ref{sec:manifold_contraction}) to give a general criterion for convergence to a limit cycle in a Banach space. We proceed using surjectively weighted spaces as in Section \ref{sec:manifold_contraction} to relax from contraction to a fixed point to contraction to a \textit{loop}. We make use of diffeomorphisms to allow the loop shape to vary, and refer to these diffeomorphic coordinate changes as \textit{conjugate} dynamics.

\begin{theorem}[Contraction to a limit cycle]
\label{thm:limit_cycle}
Let $V$ be a Banach space and $\frac{d}{dt}u = f(t, u)$ a nonautonomous system in $V$. Suppose there exists:
\begin{enumerate}
    \item A smooth submersion $\phi\in C^{\infty}(V, \R)$ such that $\phi^{-1}(0) =: \hat{S}^1$ is an embedding of the unit circle $S^1$ in $V$
    \item A diffeomorphism $h : V \to V$ with $v = h(u)$
    \item A period $\tau > 0$
\end{enumerate}
such that, for all $t \ge 0, w \in \hat{S}^1$:
\begin{enumerate}
    \item (Loop invariance) $D\phi(w)\frac{dv}{dt}(t, w) = 0$
    \item (Loop contraction) $\sup_{t, x \in V}M_{\Theta(x)}\left[\frac{\partial}{\partial v}\frac{dv}{dt}(t, x)\right] < 0$, where $\Theta(x) = D\phi(x)$
    \item (Loop symmetry) $\frac{dv}{dt}(t, w) = \frac{dv}{dt}(t + \tau, w)$
    \item (Loop non-accumulation) $\frac{dv}{dt}(t, w) \ne 0$
\end{enumerate}
Then any solution $u(t)$ asymptotically converges to a (not necessarily synchronizing) limit cycle on $h^{-1}(\hat{S}^1)$
\end{theorem}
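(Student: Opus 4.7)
The plan is to conjugate by $h$, use Theorem~\ref{thm:banach_submanifold_contraction} to establish transverse convergence to the loop $\hat{S}^1$, and then analyze the induced one-dimensional $\tau$-periodic flow along $\hat{S}^1$ to obtain the limit-cycle structure. Setting $v = h(u)$ gives a well-defined conjugate system $\frac{d}{dt}v = g(t,v)$ on all of $V$ since $h$ is a diffeomorphism. Hypothesis (1) says $D\phi(w)g(t,w) = 0$ on $\hat{S}^1$, and hypothesis (2) says $M_{D\phi(x)}[\partial_v g(t,x)] < 0$ uniformly in $t$ and $x$; these are precisely the invariance and surjectively-weighted contraction hypotheses of Theorem~\ref{thm:banach_submanifold_contraction} with $W = \R$ and weight $\Theta = D\phi$. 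That theorem then yields $\phi(v(t)) \to 0$ exponentially as $t \to \infty$.

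Next I would upgrade exponential decay of the scalar $\phi(v(t))$ to convergence of $v(t)$ to $\hat{S}^1$ in the norm of $V$. Because $\phi$ is a smooth submersion and $\hat{S}^1$ is the compact image of $S^1$ under an embedding, the Banach-manifold implicit function theorem (Chapter~IV of \cite{lang2012differential}) produces a tubular neighborhood $U \supseteq \hat{S}^1$ and a constant $C > 0$ with $\mathrm{dist}(v,\hat{S}^1) \le C\,|\phi(v)|$ for $v \in U$. The exponential decay of $\phi(v(t))$ eventually traps $v(t)$ in $U$, and the estimate then yields $\mathrm{dist}(v(t),\hat{S}^1) \to 0$. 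Pulling back by the diffeomorphism $h^{-1}$, which is Lipschitz on compact neighborhoods, gives $\mathrm{dist}(u(t), h^{-1}(\hat{S}^1)) \to 0$.

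Finally, the asymptotic behavior along $\hat{S}^1$ follows from a standard one-dimensional analysis. Fix a smooth parametrization $\psi : S^1 \to \hat{S}^1$; by loop invariance, $g(t,\cdot)$ is tangent to $\hat{S}^1$ there, so it pulls back to a scalar ODE $\dot\theta = F(t,\theta)$ on $S^1$. Hypothesis (4) says $F$ is nowhere zero, and hypothesis (3) makes $F$ $\tau$-periodic in $t$, so the time-$\tau$ Poincar\'e map $\Pi : S^1 \to S^1$ is an orientation-preserving $C^1$ circle diffeomorphism whose rotation number classifies the asymptotic behavior: a $k\tau$-periodic orbit in the rational case, or, in the Denjoy case (permitted by smoothness), a quasi-periodic trajectory whose closure is all of $\hat{S}^1$. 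In either case the $\omega$-limit set of any trajectory on $\hat{S}^1$ is a closed loop, which is the \emph{not necessarily synchronizing} limit cycle of the statement. Combined with the transverse convergence from the previous paragraph, $u(t)$ asymptotically approaches a limit cycle on $h^{-1}(\hat{S}^1)$.

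The main obstacle is the infinite-dimensional tubular-neighborhood step: $\hat{S}^1$ is one-dimensional while the transverse space is infinite-dimensional, so vanishing of the scalar $\phi(v(t))$ does not a~priori control the norm distance to $\hat{S}^1$. Compactness of the topological circle $\hat{S}^1$, together with uniform surjectivity of $D\phi$ along it (via the pointwise open mapping theorem glued by a finite partition of unity along the loop), is what supplies the required uniform submersion estimate and prevents drift in the kernel direction. Once this estimate is in hand, the rest of the argument reduces to routine applications of Theorem~\ref{thm:banach_submanifold_contraction} and classical circle-map dynamics.
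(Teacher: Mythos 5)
Your proposal follows the same skeleton as the paper's proof: conjugate by $h$ to get $\dot v = g(t,v)$, invoke Theorem~\ref{thm:banach_submanifold_contraction} with the surjective weight $\Theta = D\phi$ to obtain transverse contraction to $\hat{S}^1$, and then use $\tau$-periodicity (hypothesis 3) and non-vanishing of $g$ on the loop (hypothesis 4) to describe the asymptotic motion. Where you depart from the paper is in two refinements, and in both places your version is the more careful one. First, the paper passes directly from $\phi(v(t))\to 0$ to ``$v(t)$ exponentially approaches $\hat{S}^1$''; you correctly flag that decay of the scalar $\phi(v(t))$ does not by itself control $\mathrm{dist}(v(t),\hat{S}^1)$, and you supply the missing uniform submersion estimate $\mathrm{dist}(v,\hat{S}^1)\le C|\phi(v)|$ on a tubular neighborhood via compactness of $\hat{S}^1$. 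One residual gap remains in your version too: the assertion that exponential decay of $\phi(v(t))$ ``eventually traps $v(t)$ in $U$'' requires some properness of $\phi$ away from $\hat{S}^1$ (or a priori boundedness of the trajectory), since sublevel sets $\{|\phi|<\varepsilon\}$ may contain points far from $\phi^{-1}(0)$; neither you nor the paper addresses this. Second, your rotation-number analysis of the time-$\tau$ Poincar\'e map is genuinely different from, and corrects, the paper's treatment of the on-loop dynamics: the paper claims that hypotheses (3) and (4) force solutions starting in $\hat{S}^1$ to be $\tau$-periodic (up to winding number), which is false when the rotation number is irrational. Your observation that the continuous-time trajectory is monotone and unbounded in the lift, so that its $\omega$-limit set is the whole loop in every case, is what actually justifies the stated conclusion that $u(t)$ converges to a closed limiting loop on $h^{-1}(\hat{S}^1)$; the price is that ``limit cycle'' must be read as the attracting closed curve rather than a periodic solution. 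A final remark on the setting rather than the proof: both you and the paper treat $\hat{S}^1=\phi^{-1}(0)$ as one-dimensional while $\phi$ is a submersion into $\R$, which forces the level set to have codimension one; these are incompatible unless $\dim V = 2$, and your phrase ``the transverse space is infinite-dimensional'' inherits this tension from the theorem statement itself (it would be resolved by letting $\phi$ map into a codimension-one target, as in Theorem~\ref{thm:banach_submanifold_contraction}).
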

\begin{proof}
Taking the coordinate transform $v = h(u)$, $v$ has the dynamics:
\begin{align}
\label{eq:projected_cycle}
\frac{d}{dt}v = Dh(u)\frac{d}{dt}u = Dh(h^{-1}(v))f(t, h^{-1}(v)) =: g(t, v)
\end{align}
By hypotheses (1) and (2) and Theorem \ref{thm:banach_submanifold_contraction},  $v(t)$ satisfies the conditions for contraction to the submanifold $\hat{S}^1$:
\begin{equation}
\begin{split}
    \label{eq:loop_contraction}
    &1.\ \forall t \ge 0, z \in \hat{S}^1,\quad D\phi(z)g(t,z)  = 0\\
    &2.\ \sup_{t, v \in V} M_{\Theta(w)}[Dg_t(w)] < 0
\end{split}
\end{equation}
hence the conjugated system $v(t)$ exponentially approaches the unit circle $\hat{S}^1$. 

Next, hypothesis (3) implies that for any solution $v(t)$ of \eqref{eq:projected_cycle} with initial condition $v(0) \in \hat{S}^1$, we have $v_n(t) = v(t + \tau)$ and thus $v_n(0) = v(\tau)$ is also a solution for each $n \in \N$. However, this does not preclude solutions which are arc-wise periodic on $\hat{S}^1$. 

Finally, hypothesis (4) excludes accumulating trajectories on $\hat{S}^1$:
\begin{align}
    \label{eq:cycle_orientation}
    \forall t\in \R^+,\ v \in \hat{S}^1,\quad  g(t, v) \ne 0
\end{align}
which by continuity of $g$ implies that solutions of \eqref{eq:projected_cycle} cannot change orientation on $\hat{S}^1$.

Together, conditions (3) and (4) imply that solutions starting in $\hat{S}^1$ are $\tau$-periodic cycles on $\hat{S}^1$ (with possibly non-unit winding number); while the loop contraction conditions (1) and (2) imply that solutions $v(t)$ starting anywhere in $V$ approach the loop $\hat{S}^1$. 

Thus, the original system $u(t)$ asymptotically approaches a limit cycle on the (possibly non-circular) loop $h^{-1}(\hat{S}^1)$.
\end{proof}

We note that the above result is also novel in finite dimensions, to the authors' knowledge, and can be extended more generally to \textit{asymptotic} contractions to limit cycles on the loop $\hat{S}^1$ via the use of further weighted spaces as discussed following Theorem \ref{thm:banach_submanifold_contraction}.
Importantly, the above result does not require that solutions additionally converge to one another in phase-space, which would imply a synchronization property. 

Lastly, we combine the criteria for contraction to a limit cycle along with contraction to subspaces (Theorem \ref{thm:subspace}) to show phase-locking phenomena in \textit{heterogeneous} coupled-oscillator systems.

\begin{theorem}[Phase-locking in heterogeneous limit cycles]
\label{thm:period_sync}
Let $V$ be a Banach space, and $\phi$ and $\hat{S}^1 \subset V$ a loop submanifold as in Theorem \ref{thm:limit_cycle}. Suppose $(u_1, ..., u_i, ..., u_n) =: u \in V^n$ is the coupled system 
\begin{equation}
\begin{split}
    \label{eq:hetero_sys}
    \frac{du}{dt} &= f(t,u)
\end{split}
\end{equation}
Suppose there exist diffeomorphisms $h_i$ such that:
\begin{enumerate}
    \item (Limit-cycle leader) At least one of the conjugate dynamics $v_i = h_i(u_i)$, $v := (h_1(u_1), ..., h_n(u_n))$ is contracting to $\hat{S}^1$.
    \item (Asymptotically phase-locked followers) Angles $\theta_i$, corresponding rotation operators $R(\theta_i) : V \to V$ preserving $\hat{S}^1$, and the subspace $W \subset V^n$ consisting of rotations of some element $v \in V$:
    \begin{align*}
        W &= \{[R(\theta_1)v, ..., R(\theta_n)v]\ |\ v \in V\}
    \end{align*}
    and a corresponding projection operator $\im(P) = W,\ P^2 = P$ such that 
    the conjugate dynamics $v_i$ are contracting in the $(I-P)$-weighted space:
    \begin{align*}
        \sup_{t \ge 0,\ v \in \hat{S}^n} & M_{I-P}\left[\frac{dv}{dt}(t, v)\right] < 0
    \end{align*}
\end{enumerate}
Then each of the individual systems in \eqref{eq:hetero_sys} approaches a (possibly distinct) periodic attractor with common fixed period $T$.
\end{theorem}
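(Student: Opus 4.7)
The plan is to assemble the conclusion by stacking three intermediate results: convergence of a leader coordinate to a $\tau$-periodic loop (via Theorem \ref{thm:limit_cycle}), synchronization of the full conjugate vector to the rotation-subspace $W$ (via Theorem \ref{thm:subspace}), and finally pullback through the diffeomorphisms $h_i$.

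First I would apply Theorem \ref{thm:limit_cycle} to the leader component singled out by hypothesis~1. By assumption, its conjugate dynamics $v_i = h_i(u_i)$ satisfy the four loop hypotheses (loop invariance, loop contraction in the $D\phi$-weighted semi-inner product, loop symmetry with some period $\tau$, and loop non-accumulation), so $v_i(t)$ converges exponentially to a $\tau$-periodic curve $\gamma(t) \subset \hat{S}^1$. This fixes the candidate common period $T := \tau$.

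Second, I would apply Theorem \ref{thm:subspace} to the stacked conjugate system $v = (v_1, \dots, v_n)$ on $V^n$, taking the projection $P$ onto $W$ and $Q = I - P$. Hypothesis~2 supplies the contraction condition $\sup_{t,v} M_{I-P}[dv/dt] < 0$; flow-invariance of $W$, which is the second premise of Theorem \ref{thm:subspace}, must also be checked, and follows from the rotation equivariance implicit in the construction of $W$ (if $v \in W$, then each component is $R(\theta_i)w$ for a common $w \in V$, and the coupling must preserve this structure for the $(I-P)$-rate to vanish on $W$). Theorem \ref{thm:subspace} then yields $Qv(t) \to 0$, i.e. each $v_i(t)$ is asymptotic to $R(\theta_i) v_*(t)$ for some common trajectory $v_*(t) \in V$.

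Third, I would reconcile the two limits on the leader coordinate. Since the leader $v_i(t)$ approaches both the $\tau$-periodic loop trajectory $\gamma(t)$ and also $R(\theta_i)v_*(t)$, uniqueness of limits gives $v_*(t) = R(-\theta_i)\gamma(t)$. Because $R(\theta_j)$ preserves $\hat{S}^1$ and is a bounded linear bijection, $v_*(t)$ is itself a $\tau$-periodic curve on $\hat{S}^1$, and consequently each $v_j(t) = R(\theta_j)v_*(t)$ is a $\tau$-periodic curve on $\hat{S}^1$, though with phase shifts $\theta_j$. Pulling back through the diffeomorphisms, $u_j(t) = h_j^{-1}(v_j(t))$ converges to a $\tau$-periodic attractor contained in the (generally non-circular, generally distinct) loop $h_j^{-1}(\hat{S}^1)$. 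All $n$ components thus share the common period $T = \tau$, as claimed.

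The main obstacle is the subspace-invariance check in step two: hypothesis~2 as written provides the contraction rate but is silent on why trajectories initialized in $W$ remain in $W$, whereas Theorem \ref{thm:subspace} requires this. I expect one must either read it into the construction of $W$ (invoking the rotation symmetry of the coupling), or augment the statement with the equivariance condition $Q\,dv/dt(t,Pv) = 0$ analogous to \eqref{eq:invariant_subspace}. A secondary delicate point is ensuring that the two limits for the leader coordinate can be equated trajectorywise (rather than merely in the metric sense), which is legitimate because both limits are unique attractors of the respective contractions and the leader converges exponentially in the norm.
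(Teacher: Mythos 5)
Your proposal follows essentially the same route as the paper: Theorem \ref{thm:limit_cycle} handles the leader, Theorem \ref{thm:subspace} with $Q = I-P$ handles convergence to the phase-shift subspace $W$, and the two are combined to conclude a common period before pulling back through the $h_i$. In fact you are more careful than the paper's own (quite terse) proof on two points it glosses over: the flow-invariance of $W$, which Theorem \ref{thm:subspace} genuinely requires but which the hypotheses of Theorem \ref{thm:period_sync} do not explicitly supply, and the trajectorywise reconciliation of the leader's two limits; both of your proposed remedies (reading invariance into the rotation-equivariance of the coupling, or adding a condition analogous to \eqref{eq:invariant_subspace}) are reasonable and would be needed to make the argument fully rigorous.
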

\begin{proof} 
The proof consists of combining several prior results.
By hypothesis (1) and Theorem \ref{thm:limit_cycle}, at least one conjugate system $h_i(u_i(t))$ converges to a limit cycle on the loop $\hat{S}^1$. Moreover, by hypothesis (2) and Theorem \ref{thm:subspace}, the entire conjugate dynamics $(h_1(u_1),..., h_n(u_n))$ is contracting to the subspace of constant phase-shift on $\hat{S}^1$, and hence eventually achieves constant \textit{helicity} on the torus $\hat{T}^n := (\hat{S}^1)^n$. This implies that all the conjugate dynamics $v_i(t)$ approach phase-shifts of the same limit cycle on $\hat{S}^1$.

Thus, the original systems $u_i(t)$ asymptotically approach individual limit cycles $h_i^{-1}(\hat{S}^1)$ with the same period, resulting in ``phase-locking'' behavior.
\end{proof}

In the above result, the use of arbitrary homeomorphisms $h_i$ enabled the composition of rotations $r_i$ and time-dilations $d_i$ of $\hat{S}^1$ to restate the condition of phase-locking as the equivalent (simpler) condition that all systems approach exactly the same limit-cycle, and that it is constant-speed:
\begin{align}
    \label{eq:sync_cycle}
    \exists \tau > 0,\ \forall i,j,\ t\in \R^+,\ v \in \hat{S}^1,\quad g_j(t, v) = g_i(t,v) = g_i(t + \tau, v) 
\end{align}
The use of subspace contraction in condition (2) additionally allows the phase-locking property to be satisfied asymptotically, rather than uniformly in time.


\section{Application: long-time behavior of PDEs}
\label{sec:ap_pdes}

The classical contraction mapping principle is widely used to establish existence and well-posedness properties for nonlinear equations and PDEs \cite{evans1980solving}; however, while the contraction rate in a particular norm may be infinite, the asymptotic contraction rate in that norm may be finite (see Section \ref{sec:metric_dependence}). Thus, using results from Sections \ref{sec:main} and \ref{sec:symmetries}, we show that an ``asymptotic contraction mapping principle'' enjoys broader applicability in establishing existence, continuous data-dependence, and asymptotic stability. As before, we verify asymptotic contractions using weighted Banach spaces (see Definition \ref{eq:weighted_bsp}).

In the following, we consider a PDE of the form 
\begin{align}
    \label{eq:pde1}
    \partial_t u &= f(t, D^{\alpha^1} u, ..., D^{\alpha^n} u)
\end{align} 
with $f$ continuously differentiable, $u$ defined on a spatial domain $\Omega = \R^d$ or $\Omega = \R^d / \Z^d$, and $\{\alpha^i\}_{i=1}^n$ multi-indices up to some desired regularity $k$ (so $u \in C^k(\Omega)$ or $u \in W^{k, p}(\Omega)$ in a weak form). We use the mixed-partial derivative notation:
$$
D^{\alpha} = \frac{\partial^{|\alpha|}}{\partial x_1^{\alpha_1}...\partial x_d^{\alpha_d}},\quad |\alpha| = \sum_i\alpha_i
$$

\subsection{Regularity}
\label{sec:regularity}

We formally consider \eqref{eq:pde1} as an ODE $\partial_t u = F(t, u)$ with the nonlinear nonautonomous operator 
\begin{align}
    \label{eq:operator_ode}
    F(t, \cdot) = f(t, D^{\alpha^1}\cdot, ..., D^{\alpha^n}\cdot)
\end{align}
defined on the space of compactly supported smooth functions $C_c^\infty(\Omega)$. Let $M_{k, p}$ be the contraction rate in $C_c^\infty(\Omega)$ with respect to the Sobolev norm $W^{k,p}(\Omega)$. We use $M^{k, p}$ to indicate the the contraction rate induced by a semi-inner product on $W^{k, p}$. For example, if $p = 2$, 
\begin{align}
\label{eq:kp_rate}
M^{k, 2}[F_t] = \sup_{\phi_1\ne\phi_2 \in C_c^\infty(\Omega)}\frac{\sum_{i=0}^k\Re \langle \phi_1^{(j)} - \phi_2^{(j)}, F(t,\phi_1^{(j)}) - F(t,\phi_2^{(j)})\rangle}{\sum_{j=0}^k\langle \phi_1^{(j)} - \phi_2^{(j)}, \phi_1^{(j)} - \phi_2^{(j)}\rangle}
\end{align}
More generally, for $1 < p < \infty$, the norm is Gateaux-differentiable, with explicit formulas available for the semi-inner product (see \cite{zhang2009reproducing}). In $L^p$, formula \eqref{eq:kp_rate} is expressible as:
\begin{align*}
    [u, v]^{W^{k,p}} &= \sum_{j=0}^{k}[u^{(j)}, v^{(j)}]^{L^p}
\end{align*}

Suppose a smooth solution $\phi_*(t) \in C^1(\R^+, C_c^\infty(\Omega))$ to \eqref{eq:operator_ode} exists for all time.
If $F$ has a bounded $k, p$-asymptotic expansion rate, i.e. there exists $b > 0, \lambda_b \in \R$ and $\Theta(t, u)$ such that $\norm{\Theta(t, u)} < b$ uniformly and 
\begin{align}
\label{eq:kp_expansion}
\sup_{t \ge 0, \phi \in C_c^\infty(\Omega)} M_{\Theta(t, \phi)}^{k, p}[DF_t(\phi)] \le \lambda_b
\end{align}
then by the existence hypothesis and Theorem \ref{thm:c_cont}, any smooth initial condition $\phi \in C_c^\infty(\Omega)$ satisfies the following $W^{k, p}$-continuous dependence on initial conditions:
\begin{align}
\label{eq:regularity}
\norm{\phi(t) - \phi_*(t)}_{k,p} \le b^2e^{\lambda_b t}\norm{\phi(0) - \phi_*(0)}_{k, p}
\end{align}
Therefore, smooth initial conditions cannot lose their regularity in finite time. 

If furthermore, we define an extension of \eqref{eq:pde1} with $D^\alpha$ taken in the weak sense, i.e. $D^\alpha u := v$ where 
\begin{align*}
    \int_\Omega D^\alpha u \phi dx &= (-1)^{|\alpha|}\int_\Omega v D^\alpha \phi dx\quad \forall \phi \in C_c^\infty(\Omega)
\end{align*}
such that the corresponding vector field  has domain $\dom(F_t) = W^{k, p}(\Omega)$, and if $F$ satisfies the asymptotic expansion rate \eqref{eq:kp_expansion} instead the weak sense:
\begin{align}
\label{eq:kp_expansion_weak}
\sup_{t \ge 0, u \in W^{k, p}(\Omega)} M_{\Theta(t, u)}^{k, p}[DF_t(u)] \le \lambda_b
\end{align}
then weak solutions cannot lose their weak-differentiability in finite time. 

As a corollary, we can always ``improve'' regularity by adding a dissipative operator $G(t, u)$ such that there exists $b, \lambda_b, \Theta(t, u)$ as before satisfying a finite asymptotic expansion rate:
\begin{equation}
\begin{split}
    \label{eq:regularized}
\partial_t u &= f(t, D^{\alpha^1} u, ..., D^{\alpha^n} u) + G(t, u)\\
    F(t, \cdot) &= f(t, D^{\alpha^1}\cdot, ...)\\
    \sup_{t \ge 0, \phi \in C_c^\infty(\Omega)} &M_{\Theta(t, \phi)}^{k, p}[DF_t(\phi) + DG_t(\phi)] \le \lambda_b
\end{split}
\end{equation}
As we discuss in the Section \ref{sec:dissipative}, $G$ is often a Laplace operator in physical equations (see e.g., vanishing viscosity method \cite{evans1998partial}); however, \eqref{eq:regularized} significantly generalizes possible regularizing terms to time-varying dissipative effects in an arbitrarily weighted Sobolev space. (In this context, ``weighted Sobolev space'' refers to our construction of weighted Banach spaces (\ref{def:w_bsp}) rather than the typical definition $\norm{\langle \cdot\rangle^d f}_{L^p(\Omega)}$)

\subsection{Existence}

In the following, we describe a ``weighted contraction mapping principle'' for establishing the existence of solutions to PDEs and other nonlinear operator equations. In particular, we make use of weighted Banach spaces along with Theorem \ref{thm:c_cont} to verify that an operator is asymptotically norm-contracting, which along with an additional boundedness criterion, will be used to show existence of unique fixed points and limits of approximations.

\subsubsection{Existence for time-independent equations}
Consider a time-independent equation of the form
\begin{align}
    \label{eq:time_indep}
    f(D^{\alpha^1} u, ..., D^{\alpha^n} u) &= 0
\end{align}
for whom there exists $b > 0$ and $\lambda_b$ such that $M_{\Theta(u)}[f] \le \lambda_b < 0$ in some weighted Banach space $V_\Theta$ with $\norm{\Theta(t, u)} < b$ (see \ref{def:weighted_sip}). By Theorem \ref{thm:c_cont}, the time-dependent equation $\partial_tu = f(D^{\alpha^1} u, ..., D^{\alpha^n} u)$ is asymptotically contracting in the distance $\norm{\cdot}_V$, and therefore converges to a unique fixed point $u_*$. This $u_*$ is the unique solution to \eqref{eq:time_indep}. Importantly, we require only that the \textit{asymptotic} contraction rate under some invertible $b$-bounded weight $\Theta(u) \in GL(V, W)$ with codomain $W$ be negative.

\subsubsection{Existence of limits of regularized equations} 
\label{sec:dissipative}
When \eqref{eq:pde1} is difficult to solve directly or does not admit classical (sufficiently differentiable) solutions $u$ for all initial data, a common method is to consider the ``regularization'' of \eqref{eq:pde1} by some dissipative operator (see \eqref{eq:regularized}). In physical settings, this may correspond to damping, friction, or viscous-like effects. Such terms are chosen to enable the existence of smooth solutions, and we then consider a limit as the dissipating effect vanishes:
\begin{align}
    \label{eq:pde2}
    \partial_t u_\varepsilon &= f_\varepsilon(D^{\alpha^1} u_\varepsilon, ..., D^{\alpha^n} u_\varepsilon),\quad f_\varepsilon \to f \text{ as } \varepsilon \to 0^+
\end{align}
with convergence on $f_\varepsilon$ appropriately strong. The goal is to pass from a limit in $\varepsilon$ to a limit in $u_\varepsilon$.

Using a compactness argument, we show the existence of such limits when the regularized equation \eqref{eq:pde2} has a uniformly bounded contraction rate in some weighted Banach space.

\begin{theorem}[Approximation family with uniform $L^p$ contraction rate has a convergent subsequence]
\label{thm:limit1}
Let $u_0 \in L^p(\T^n)$ where $\T^n$ is the $n$-torus $\R^n/\Z^n$.
Suppose there exist $b, c > 0$ and $\lambda \in \R$, such that for all $\varepsilon > 0$, the regularized Cauchy problem \eqref{eq:pde2} admits solutions $\{u_\varepsilon\}_{\varepsilon>0}$ defined on $[0, T]\times\T^n$ satisfying $u_\varepsilon(0,x) = u_0(x)$, and:
\begin{enumerate}
    \item (Uniform $L^p$ asymptotic contraction rate) There exists $\Theta_\varepsilon(t, p)$ such that $M_{\Theta_\varepsilon}[f_\varepsilon] = \lambda_\varepsilon \le \lambda$ and $\sup_{t, p}\max(\norm{\Theta_\varepsilon(t,p)}, \norm{\Theta_\varepsilon(t, p)}^{-1}) \le b$
    \item (Existence of an $L^p$ bounded solution) There exists a solution $u_\varepsilon^*(t,x)$ (with arbitrary initial data) satisfying $\sup_{t \in [0, T]}\norm{u_\varepsilon^*(t,\cdot)}_p \le c$
    \item (Translation invariance) For all $y \in \R^n$ and torus translations $\tau_y :\T^n\to\T^n$, the function $u_\varepsilon(t, \tau_y x)$ is a solution with $u_\varepsilon(0, \tau_yx) = u_0(\tau_yx)$
    \item (Uniform continuity about initial data)  $\lim_{t\to0^+}\norm{u_\varepsilon(t,\cdot) - u_0}_{L^p(\T^n)} = 0$ uniformly on $\{u_\varepsilon\}$
\end{enumerate}

Then the sequence $\{u_\varepsilon\}$ with $\varepsilon\to 0^+$ has a convergent subsequence $\{u_{\varepsilon_j}\}_{j=1}^\infty$ with limit $u \in L^p([0, T] \times \T^n)$.
\end{theorem}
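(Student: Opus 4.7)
The plan is to verify relative compactness of $\{u_\varepsilon\}$ in $C([0,T]; L^p(\T^n))$ via Arzel\`a--Ascoli, which will immediately give convergence of a subsequence in $L^p([0,T]\times\T^n)$ since $T < \infty$. This requires: (a) for each $t \in [0,T]$, relative compactness of $\{u_\varepsilon(t,\cdot)\}_\varepsilon$ in $L^p(\T^n)$, which I will establish via the Kolmogorov--Riesz criterion on the compact group $\T^n$ (uniform $L^p$-boundedness plus uniformly vanishing translations); and (b) uniform equicontinuity of the family $\{t \mapsto u_\varepsilon(t,\cdot)\}_\varepsilon$ as maps into $L^p(\T^n)$. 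The key tool throughout is the pointwise-in-time contraction bound from Theorem \ref{thm:c_cont} applied to hypothesis (1): for any two solutions $v_1, v_2$ of the $\varepsilon$-problem,
\begin{equation*}
    \norm{v_1(t) - v_2(t)}_p \le b^2 e^{\lambda t}\norm{v_1(0) - v_2(0)}_p, \qquad t \in [0,T],
\end{equation*}
uniformly in $\varepsilon$.

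First I would obtain uniform $L^p$-boundedness by applying the above with $v_1 = u_\varepsilon$ and $v_2 = u_\varepsilon^*$ and combining with hypothesis (2) via the triangle inequality:
\begin{equation*}
    \sup_{\varepsilon > 0,\, t\in[0,T]}\norm{u_\varepsilon(t,\cdot)}_p \le c + b^2 e^{|\lambda|T}\bigl(\norm{u_0}_p + c\bigr).
\end{equation*}
For spatial equicontinuity, hypothesis (3) implies that $u_\varepsilon(t,\tau_y\cdot)$ is itself a solution with initial data $u_0\circ\tau_y$, so the contraction estimate yields
\begin{equation*}
    \norm{u_\varepsilon(t,\tau_y\cdot) - u_\varepsilon(t,\cdot)}_p \le b^2 e^{|\lambda|T}\norm{u_0\circ\tau_y - u_0}_p,
\end{equation*}
and the right-hand side tends to $0$ as $|y|\to 0$ by continuity of translation in $L^p(\T^n)$ (applied to the fixed function $u_0$), uniformly in $\varepsilon$ and $t$. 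Together with boundedness, this gives (a) via Kolmogorov--Riesz at each fixed $t$.

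For temporal equicontinuity (b) I exploit the autonomy of $f_\varepsilon$ in \eqref{eq:pde2}: for $h > 0$ the time-shift $w_\varepsilon(s,x) := u_\varepsilon(s+h,x)$ is itself a solution of the $\varepsilon$-problem on $[0, T-h]$, with initial data $u_\varepsilon(h,\cdot)$. Applying the contraction estimate to $w_\varepsilon$ and $u_\varepsilon$,
\begin{equation*}
    \norm{u_\varepsilon(t+h,\cdot) - u_\varepsilon(t,\cdot)}_p \le b^2 e^{|\lambda|T}\norm{u_\varepsilon(h,\cdot) - u_0}_p,
\end{equation*}
and the right-hand side vanishes as $h \to 0^+$ uniformly in $\varepsilon$ by hypothesis (4). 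Arzel\`a--Ascoli in $C([0,T]; L^p(\T^n))$ then produces the desired subsequence.

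The main anticipated obstacle is the temporal equicontinuity step: it relies crucially on both the \emph{uniform-in-$\varepsilon$} continuity at $t = 0$ provided by hypothesis (4) and on the autonomous structure of $f_\varepsilon$, which together allow a time-shift to be reinterpreted as a solution starting from $u_\varepsilon(h,\cdot)$, a function close to $u_0$. Without either piece one would be forced to bound $\partial_t u_\varepsilon$ in some weak topology independently of $\varepsilon$, which is typically unavailable in singular-limit regimes such as vanishing viscosity. Hypothesis (1) is precisely the mechanism by which initial-time regularity is propagated to all $t \in [0,T]$ with a prefactor uniform in $\varepsilon$; the role of the other hypotheses is to supply the required initial-time estimates in space and time that are then transported forward.
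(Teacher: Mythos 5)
Your proposal is correct, and it reaches the conclusion by a genuinely different compactness route than the paper. Both arguments rest on the same three estimates extracted from Theorem \ref{thm:c_cont} together with hypotheses (2)--(4): the uniform bound $\norm{u_\varepsilon(t,\cdot)}_p \le c + b^2e^{|\lambda|T}(\norm{u_0}_p + c)$, the spatial translation estimate via hypothesis (3), and the temporal estimate obtained by regarding the time-shift $u_\varepsilon(\cdot+h,\cdot)$ as a solution issued from $u_\varepsilon(h,\cdot)$ and invoking hypothesis (4). The paper then zero-extends $u_\varepsilon$ to $L^p(\R^{n+1})$, verifies boundedness, joint space--time equicontinuity, and equitightness, and applies the Kolmogorov--Riesz--Fr\'echet theorem once in the full space--time domain; this forces some extra bookkeeping (the boundary contributions $\int_{T-|h|}^T$ and $\int_0^{|h|}$ of the shifted extension, and the tightness check) but needs only a single compactness theorem and delivers exactly the stated $L^p([0,T]\times\T^n)$ convergence. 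You instead apply Kolmogorov--Riesz slice-wise on the compact group $\T^n$ (where no tightness condition is needed) and then the vector-valued Arzel\`a--Ascoli theorem in $C([0,T];L^p(\T^n))$; this avoids the extension entirely and yields the strictly stronger conclusion of uniform-in-time convergence, which implies the space--time $L^p$ convergence since $T<\infty$. One point in your favor: you make explicit that the temporal step requires the autonomy of the regularized equation \eqref{eq:pde2} so that time-shifts are again solutions to which the contraction estimate applies --- the paper's bound $B(|h|,T)\norm{u_\varepsilon(|h|,\cdot)-u_0}^p$ uses exactly the same fact, but silently. Do note that your one-sided modulus $\norm{u_\varepsilon(t+h,\cdot)-u_\varepsilon(t,\cdot)}_p \le b^2e^{|\lambda|T}\norm{u_\varepsilon(h,\cdot)-u_0}_p$ already controls both directions of time comparison at every base point, so the maps $t\mapsto u_\varepsilon(t,\cdot)$ are genuinely in $C([0,T];L^p(\T^n))$ with a common modulus, as Arzel\`a--Ascoli requires.
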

\begin{proof} 
Let $\varepsilon > 0$.
We first show that $\{u_\varepsilon\} \subset L^p([0, T]\times \T^n)$ and it is bounded. Applying hypotheses (1), (2), and Theorem \ref{thm:c_cont}, there exists $u_\varepsilon^*(t,x)$ such that:
\begin{align*}
    \norm{u_\varepsilon}_{L^p([0, T]\times \T^n)}^p &\le  \int_0^T \norm{u_\varepsilon(t,\cdot)}_{L^p(\T^n)}^pdt\\ 
    &\le T\sup_{t \in [0, T]}\norm{u_\varepsilon^*(t,\cdot)}^p + \int_0^T \norm{u_\varepsilon(t,\cdot) - u_\varepsilon^*(t,\cdot)}^pdt\\
    &\le Tc^p + \frac{b^2}{\lambda_\varepsilon p}(e^{\lambda_\varepsilon pT} - 1)\left(\norm{u_0}^p + \norm{u_\varepsilon^*(0,\cdot)}^p\right)\\
    &\le Tc^p + B(0, T)\left(\norm{u_0}^p + c^p\right)< \infty
\end{align*}
where we have defined the function:
\begin{align*}
    B(s, t) = \frac{b^2}{\lambda p}(e^{\lambda pt} - e^{\lambda ps})\quad \text{ for } s \le t
\end{align*}
and the $\varepsilon$-uniform bound on the final line uses $\lambda_\varepsilon < \lambda$ from hypothesis (1).

Next, we define the extension of $u_\varepsilon$ to $\hat{u}_\varepsilon \in L^p(\R^{n+1})$ as follows:
\begin{align*}
    \hat{u}_\varepsilon(t, x) &= \begin{cases} u_\varepsilon(t, x) & t \in [0, T],\ x \in [0,1)^n\\ 0 & \text{otherwise}\end{cases}
\end{align*}
We now show that $\{\hat{u}_\varepsilon\}$ is equicontinuous. Let $0 < \delta < T$ and $h \in \R, y \in \R^n$ such that $|h|, |y| < \delta$; then:
\begin{align*}
    \norm{\hat{u}_\varepsilon(\cdot-h, \tau_y\cdot) - \hat{u}_\varepsilon}^p &\le \norm{\hat{u}_\varepsilon(\cdot-h, \tau_y\cdot) - \hat{u}_\varepsilon(\cdot-h,\cdot)}^p + \norm{\hat{u}_\varepsilon(\cdot-h,\cdot) - \hat{u}_\varepsilon}^p
\end{align*}
Applying hypotheses (1) and (3) to the first term,
\begin{align*}
    \norm{\hat{u}_\varepsilon(\cdot-h, \tau_y\cdot) - \hat{u}_\varepsilon(\cdot-h,\cdot)}^p &\le \int_{0}^T\norm{u_\varepsilon(t, \tau_y\cdot) - u_\varepsilon(t,\cdot)}^p_{L^p(\T^n)}dt \\
    &\le B(0, T) \norm{u_0(\tau_y \cdot)- u_0}^p
\end{align*}
Since $u_0 \in L^p(\T^n)$, we have $\norm{u_0(\tau_y\cdot)- u_0}_{L^p(\T^n)}^p \to 0$ as $\delta \to 0^+$ by density of $C_c^\infty \subset L^p$ (Lemma 4.3, \cite{brezis2011functional}). Applying hypotheses (1) and (2) to the second term and using the assumption $|h| < \delta < T$, 
\begin{align*}
    \norm{\hat{u}_\varepsilon(\cdot-h,\cdot) - \hat{u}_\varepsilon}_{L^p(\R^{n+1})}^p &\le \int_{T - |h|}^T\norm{u_\varepsilon(t, \cdot)}^pdt + \int_0^{|h|}\norm{u_\varepsilon(t, \cdot)}^pdt \\
    &+ \int_{|h|}^T\norm{u_\varepsilon(t-|h|,\cdot)-u_\varepsilon(t,\cdot)}^pdt\\
    &\le 2|h| c^p + (B(T-|h|,T)+B(0,|h|))(\norm{u_0}^p + c^p) \\
    &+B(|h|,T)\norm{u_\varepsilon(|h|,\cdot)-u_0}^p
\end{align*}
Since $B(s, t) \to 0$ as $s \to t$, the first two terms approach $0$ uniformly on $\{u_\varepsilon\}$ as $\delta\to 0^+$. Hypothesis (4) gives the uniform convergence in the final term.

Lastly, since $\hat{u}_\varepsilon$ is supported only on $[0, T]\times[0, 1)^n$ by definition, there exists $R > 0$ such that 
\begin{align*}
    \int_{|t|,|x| > R}|\hat{u}_\varepsilon(t,x)|^p &= 0
\end{align*}
hence $\{\hat{u}_\varepsilon\}$ is equitight.

By the Kolmogorov-Riesz(-Fr\'echet) compactness theorem (Theorem 5, \cite{hanche2010kolmogorov} or Corollary 4.27, \cite{brezis2011functional}), the boundedness, equicontinuity, and equitightness of $\{\hat{u}_\varepsilon\}$ in $L^p(\R^{n+1})$ implies that $\{u_\varepsilon\}$ is relatively compact in $L^p([0, T] \times\T^{n+1})$ by considering the restriction to $[0, T] \times [0,1)^n$. Hence, for any sequence $\{\varepsilon_k\}_{k=1}^\infty$ with $\varepsilon_k \to 0^+$, there exists a convergent subsequence $\{u_{\varepsilon_j}\}_{j=1}^\infty$ with $u_{\varepsilon_j}\to u \in L^p$.
\end{proof}

Theorem \ref{thm:limit1} enables one to pass to a subsequential limit of $u_\varepsilon$ in a sequence of regularized equations \eqref{eq:pde2}, by transferring the (uniformly bounded) contraction rate of a family of approximate solutions to an accumulation point of the family. Subsequently, one verifies that any such accumulation point solves the original equation with $\varepsilon = 0$ in the weak sense.

This approach is inspired by the vanishing viscosity method \cite{crandall1992user} used to construct weak solutions to a variety of first-order equations, such as nonlinear conservation laws \cite{bianchini2005vanishing, lattanzio2003global, kruzhkov1970first}. In this case, the dissipative effect of the Laplace operator enables both the existence of classical solutions and the uniform control of continuity required to show compactness.

On the other hand, Theorem \ref{thm:limit1} may be viewed as a generalization of this method (separate from the generalization to \textit{viscosity solutions} \cite{crandall1992user}) by allowing approximations which are expansive (i.e., $\lambda_\varepsilon$ are allowed to be positive) in an arbitrarily weighted $L^p$ norm as in Definition \ref{eq:weighted_rate}; thus, the approximation $f_\varepsilon$ need not arise from purely dissipative effects. Due to the norm- and weight-dependence of contraction rates described in Section \ref{sec:metric_dependence}, this substantially increases the set of possible regularized approximations one may consider in \eqref{eq:pde2}. We refer to this as a vanishing \textit{one-sided Lipschitz} approximation, since in the unweighted norm, the contraction rate $M_V[f_\varepsilon]$ (Definition \ref{def:weighted_rate}) upper-bounds the one-sided Lipschitz constant of $f$ (see \cite{aminzare2014contraction}), while in more general weighted norms imposes a uniform control on the expansion rate of the dynamics.

\subsection{Examples}

Exponential convergence of the heat equation to $0$ is readily verified in the space of trace-zero functions using unweighted contraction or Fourier analysis (Proposition 6.1, \cite{soderlind2006logarithmic}). On the other hand, in the case of conservative (zero-flux) boundaries, we can apply contraction in surjectively weighted spaces using Theorem \ref{thm:subspace} to establish exponential convergence to the subspace of constant functions without Fourier analysis.

\begin{example}[Heat equation with zero-flux boundaries]
Let $u : \Omega \to \R$ with $\Omega \subset \R^d$ a nonempty, open, connected, and bounded set with boundary $\partial \Omega$ of class $C^1$, and let:
\begin{equation}
\begin{split}
    \label{eq:heat_zf}
     u_t &= \alpha\Delta u\\
    \nabla u \cdot \hat{n} &= 0
\end{split}
\end{equation}
where $\hat{n}$ is the boundary normal. Furthermore, let 
\begin{align}
    \label{eq:c_proj}
    Pu &= \frac{1}{|\Omega|}\int_\Omega u
\end{align}
be the $L^2$-orthogonal projection onto the subspace of constant functions $W = \{v \in L^2(\Omega)\ |\ v \equiv c \in \R \}$. By the zero-flux conditions, $Q\Delta Pu = Q0 = 0$, hence $W$ is an invariant subspace. Letting $Q = I - P$, we have 
\begin{align}
    \label{eq:weighted_laplacian}
    M_Q[\Delta] &= \sup_{v \in L^2(\Omega)}\frac{\langle Qv, Q\Delta v\rangle}{\norm{Qv}^2} = \sup_{v \in W^\perp}\frac{\langle \nabla v, \nabla v\rangle}{\norm{v}^2} < 0
\end{align}
thus by Theorem \ref{thm:subspace}, \eqref{eq:heat_zf} exponentially converges to the subspace of constant functions. The last is by the Poincar\'e-Wirtinger inequality.
\end{example}

\begin{example}[Existence and uniqueness for a nonlinear Poisson equation]
Let $u : \Omega \to \R^n$ with $\Omega = [0, 1]^d$, and consider the second-order equation:
\begin{equation}
\begin{split}
    \label{eq:poisson_like}
    L u + f(u) &= 0\\
    u &= 0 \text{ on } \partial \Omega
\end{split}
\end{equation}
with $f \in C^1(\R^n, \R^n)$ and $L$ defined as the continuous linear extension of the Laplacian densely $\Delta$ densely defined on $C_c^\infty(\Omega) \subset W_0^{1,2} = V$ (the Sobolev space $W^{1,2}$ with vanishing Dirichlet conditions). By the divergence theorem,
\begin{equation}
\begin{split}
    \label{eq:laplace_neumann}
    M(\Delta) &= \sup_{\phi \ne 0}\frac{\langle \phi, \nabla \cdot \nabla \phi\rangle}{\langle \phi, \phi\rangle} = \sup_{\phi \ne 0}\frac{\int_{\Omega}(\nabla \cdot (\phi\nabla \phi) - \nabla \phi \cdot \nabla \phi)d\Omega}{\langle \phi, \phi\rangle} \\
    &= \sup_{\phi \ne 0}\frac{\int_{\partial \Omega}\phi\nabla \phi \cdot \hat{n}\partial \Omega - \langle \nabla \phi, \nabla \phi\rangle }{\langle \phi, \phi\rangle} = \sup_{\phi \ne 0}\frac{-\langle\nabla \phi, \nabla \phi\rangle}{\langle \phi, \phi\rangle} \le -\lambda(\Omega)
\end{split}
\end{equation}
for some $\lambda(\Omega) > 0$ by the Poincar\'e inequality \cite{evans1998partial}. Furthermore, if there exists any uniformly bounded family of weights $\Theta(u) : V \to GL(V)$ such that:
\begin{align*}
    \sup_{u \in V} M_{\Theta(u)}[Df(u)] &= \sup_{u \in V, v \ne 0}\frac{\langle\Theta(u)v, \Theta(u)Df(u)v\rangle}{\norm{v}^2} < \lambda(\Omega)
\end{align*}
Then \eqref{eq:poisson_like} is an asymptotic contraction in $V$ by Theorem \ref{thm:c_cont}, the time-dependent dynamics (letting $0 \mapsto u_t$) converges to a unique fixed point  (Theorem 2.1, \cite{kirk2003fixed}), and there \eqref{eq:poisson_like} a unique solution. We note that the standard uniqueness proof for the linear inhomogeneous equation $\Delta u + f = 0$ by subtracting the difference of solutions does not apply, due to presence of the ``feedback'' term $f(u)$.
\end{example}

Prior work has shown that a reaction term with sufficiently bounded expansion rate in unweighted (6.6, \cite{soderlind2006logarithmic}) and diagonally weighted \cite{aminzare2013logarithmic} norms is sufficient to guarantee spatially homogeneous equilibria in reaction-diffusion systems. We use weighted contractions to give the most general version of this condition as a bounded expansion rate in the subspace of non-constant functions.

\begin{example}[Suppressing pattern formation in reaction-diffusion systems]
Let $\Omega$ be a domain as in \eqref{eq:heat_zf} and a nonautonomous system of coupled reaction-diffusion systems on $\Omega$ with zero-flux boundary conditions:
\begin{equation}
\begin{split}
    \label{eq:rd}
    \partial_t u_i &= \alpha_i\Delta u_i + f_i(t, u_1, ..., u_n),\quad \alpha_i > 0\\
    \nabla u_i \cdot \hat{n} & = 0
\end{split}
\end{equation}
and let $P$ be a projection onto the subspace of constant functions as in \eqref{eq:c_proj}. Letting $Q = I-P$, in \eqref{eq:weighted_laplacian} we showed that the weighted contraction of rate of the Laplacian is $M_Q(\Delta) < 0$ in $H^{2}(\Omega)$. Consequently, if 
\begin{enumerate}
    \item $Qf_i(t, Pv) = 0$ for all $i$, $t \ge 0$, $v \in H^{2}(\Omega)$
    \item $M_Q[f_i] < \alpha_i|M_Q[\Delta]|$ for all $i$
\end{enumerate}
Then by Theorem \ref{thm:subspace} and sub-additivity of the weighted contraction rate, \eqref{eq:rd} exponentially converges to a spatially homogeneous solution.
\end{example}




\section{Conclusion}

In this work, we introduced weighted Banach spaces \eqref{eq:weighted_bsp} as a means of verifying a broad range of asymptotically incrementally stable behaviors of infinite-dimensional dynamical systems. These are families of topologically equivalent normed spaces, emulating the use of Riemannian metrics in classical (geodesic) contraction analysis \cite{lohmiller1998contraction}, by endowing tangent spaces with continuously time- and space-varying weighted semi-inner products. The weighted contraction rate of an operator was formulated using the numerical range induced by this semi-inner product \eqref{eq:weighted_rate}. In the case of uniformly bounded invertible weights, this enabled the verification of asymptotically norm-contracting differential equations under a much wider range of conditions than the classical contraction mapping principle, and gave rise to a natural invariant, the asymptotic contraction rate \eqref{eq:as_rate}, formulated by taking the best-case contraction rates over the choice of such weights. The asymptotic contraction rate and its time-uniform counterpart were shown to upper-bound the maximum Lyapunov exponent (Section \ref{sec:metric_dependence}), while remaining invariant under a broader family of coordinate-transforms and generalizing well to the non-autonomous, infinite-dimensional case.

In the second part of this work, we developed the notion of weighted contractions further by introducing surjectively weighted spaces, which were used in a number of ways to verify non-equilibrium asymptotic behaviors such as convergence to subspaces (\ref{thm:subspace}), submanifolds (\ref{thm:banach_submanifold_contraction}), group-action invariant solutions (\ref{sec:symmetries}), limit cycles (\ref{thm:limit_cycle}), and phase-locking behaviors (\ref{thm:period_sync}). We developed several applications to the well-posedness and long-time analysis of PDEs (\ref{sec:ap_pdes}), notably the use of finite asymptotic expansion rates to develop continuous data-dependence properties (\ref{sec:regularity}) and an asymptotic contraction mapping principle for establishing limits of vanishing one-sided Lipschitz approximations (\ref{thm:limit1}). 

The use of weighted spaces extends the classical contraction mapping principle to either an asymptotic one (Theorem \ref{thm:c_cont} in combination with \cite{kirk2003fixed}) or one which verifies nonequilibrium limiting properties. In the former case, optimization over weights suggests a natural formulation for input synthesis of PDE controls, similarly to the method of control contraction metrics \cite{manchester2017control} in $\R^n$. Such procedures may be practically implemented using Galerkin methods. In the latter case, weighted contraction analysis in the infinite-dimensional setting provides a path to studying asymptotic non-equilibrium behaviors of many-body systems. In particular, passing contraction properties of a microscopic ODE description through continuum limits may yield an effective means of order reduction, especially for systems which may not be well-approximated by mean-field or hydrodynamic descriptions, but possess some invariant attractive manifold. Lastly, weighted contraction analysis for norms other than $L^p$ -- such as Wasserstein norms \cite{piccoli2019wasserstein} -- may have practical applications in stability analysis of physical processes such as transport.

\section*{Acknowledgements}    
This research did not receive any specific grant from funding agencies in the public, commercial, or not-for-profit sectors.

\bibliographystyle{elsarticle-num-names} 
\bibliography{references}

\end{document}